\documentclass[final,onefignum,onetabnum]{siamart190516}

\usepackage{amsmath}
\RequirePackage[numbers]{natbib}
\RequirePackage{hyperref}
\usepackage[utf8]{inputenc} 
\usepackage{amsfonts}
\usepackage{bm}
\usepackage{cleveref}
\usepackage{graphicx}
\usepackage[export]{adjustbox}
\usepackage[caption=false,font=footnotesize,indention=0cm]{subfig}
\usepackage[shortlabels]{enumitem}
\usepackage{algorithm}
\usepackage{algpseudocode}

\theoremstyle{definition}
\newtheorem{example}[theorem]{Example}
\newtheorem{remark}[theorem]{Remark}

\def\N{\mathbb{N}}
\def\R{\mathbb{R}}
\def\C{\mathbb{C}}
\def\lmax{\lambda_{\mathrm{max}}}

\def\addlegendimage{\csname pgfplots@addlegendimage\endcsname}



\ifpdf
\hypersetup{
  pdftitle={Adaptive rational Krylov methods for exponential Runge--Kutta integrators},
  pdfauthor={K. Bergermann, and M. Stoll}
}
\fi

\headers{Rat.\ Krylov exponential Runge--Kutta integrators}{K. Bergermann, and M. Stoll}

\title{Adaptive rational Krylov methods for exponential Runge--Kutta integrators}

\author{Kai Bergermann\thanks{Chair of Scientific Computing, Technische Universit\"at Chemnitz, Department of Mathematics, 09107 Chemnitz, Germany
		(\email{kai.bergermann@math.tu-chemnitz.de}, \email{martin.stoll@math.tu-chemnitz.de}).}
	\and Martin Stoll\footnotemark[1]
}




\begin{document}

\maketitle

\begin{abstract}
We consider the solution of large stiff systems of ordinary differential equations with explicit exponential Runge--Kutta integrators. 
These problems arise from semi-discretized semi-linear parabolic partial differential equations on continuous domains or on inherently discrete graph domains.
A series of results reduces the requirement of computing linear combinations of $\varphi$-functions in exponential integrators to the approximation of the action of a smaller number of matrix exponentials on certain vectors.
State-of-the-art computational methods use polynomial Krylov subspaces of adaptive size for this task.
They have the drawback that the required number of Krylov subspace iterations to obtain a desired tolerance increase drastically with the spectral radius of the discrete linear differential operator, e.g., the problem size.
We present an approach that leverages rational Krylov subspace methods promising superior approximation qualities.
We prove a novel a-posteriori error estimate of rational Krylov approximations to the action of the matrix exponential on vectors for single time points, which allows for an adaptive approach similar to existing polynomial Krylov techniques.
We discuss pole selection and the efficient solution of the arising sequences of shifted linear systems by direct and preconditioned iterative solvers.
Numerical experiments show that our method outperforms the state of the art for sufficiently large spectral radii of the discrete linear differential operators.
The key to this are approximately constant numbers of rational Krylov iterations, which enable a near-linear scaling of the runtime with respect to the problem size.
\end{abstract}

\begin{keywords}
stiff systems of ODEs, exponential integrators, matrix exponential, rational Krylov methods
\end{keywords}

\begin{AMS}
05C50, 
15A16, 
65F60, 
65L04 
\end{AMS}

\section{Introduction}\label{sec:intro}

The efficient numerical solution of ordinary differential equations (ODEs) is a fundamental problem in numerical analysis and a large body of work has been devoted to this problem, cf.\ e.g., \cite{hairer1991solving,lambert1991numerical}.
In this paper, we consider large and stiff systems of ODEs \cref{eq:ode_system} with discrete linear differential operators $\bm{A}\in\R^{n \times n}$ and semi-linear functions $g$.
Such problems arise, e.g., from semi-discretized semi-linear parabolic partial differential equations (PDEs) on continuous domains or on inherently discrete graph domains.
While the former problem with spatial discretizations by finite differences or finite elements is a very classical one, the simulation of dynamical processes on discrete graphs or networks (we use the two terms synonymously throughout the manuscript) has recently gained attention \cite{benzi2020matrix,estrada2021path,benzi2022rational,hutt2022predictable}.

In principle, a large variety of techniques such as Runge--Kutta methods are available for the numerical integration of systems of ODEs.
Recent decades, however, have witnessed an increased interest in exponential integrators, which are particularly well-suited for the solution of stiff or highly oscillatory problems \cite{cox2002exponential,hochbruck2005explicit,hochbruck2005exponential,krogstad2005generalized,tokman2006efficient,hochbruck2010exponential,tokman2011new,weiner2013linear,rainwater2016new}.
Exponential integrators owe their name to the matrix exponential propagator and have decisively fueled a successful line of research on the efficient approximation of the matrix exponential function \cite{moler1978nineteen,saad1992analysis,hochbruck1997krylov,sidje1998expokit,moler2003nineteen,moret2004rd,higham2005scaling,van2006preconditioning,higham2008functions,beckermann2009error,al2010new,al2011computing}.
In particular, we mention polynomial Krylov subspace methods \cite{saad1992analysis,hochbruck1997krylov,moler2003nineteen,higham2008functions,golub2013matrix} in combination with rational Pad\'e approximations of the compressed Hessenberg representation of the original matrix \cite{higham2005scaling,al2010new}.
In the field of network science, these techniques also provide a variety of insights into structural network properties \cite{estrada2005subgraph,estrada2010network,benzi2020matrix,bergermann2022fast}.

In addition to the matrix exponential, exponential integrators generally require the evaluation of linear combinations of $\varphi$-functions acting on certain vectors that depend on the trajectory of the system of ODEs.
A series of results by Saad \cite{saad1992analysis}, Sidje \cite{sidje1998expokit}, and Al-Mohy and Higham \cite{al2011computing} shows that this problem can be reduced to the computation of the action of the matrix exponential of a matrix $\widetilde{\bm{A}}$ on vectors $\widetilde{\bm{c}}$, where $\widetilde{\bm{A}}$ is a slightly enlarged version of $\bm{A}$.
The computational efficiency of exponential integrators is thus determined by the efficiency of computing quantities $e^{h_i\widetilde{\bm{A}}}\widetilde{\bm{c}}$ for given time step sizes $h_i\in\R_{>0}$.

The direct approximation of the matrix exponential \cite{moler1978nineteen,moler2003nineteen,higham2005scaling,al2010new} is computationally burdensome in terms of runtime and memory requirement and hence infeasible for medium to large problem sizes.
Since only its action on vectors is required, state-of-the-art software packages \texttt{phipm} \cite{niesen2012algorithm} and \texttt{KIOPS} \cite{gaudreault2018kiops} use polynomial Krylov subspace approximations.
These routines are adaptive in the sense that the polynomial Krylov subspace size and possibly a sub-time interval step size is chosen based on an a-posteriori error estimate of the approximation of $e^{h_i\widetilde{\bm{A}}}\widetilde{\bm{c}}$ \cite{saad1992analysis}.

While these methods are extremely effective for many problems, research efforts for their improvement are still ongoing \cite{deka2022comparison,croci2023exploiting,deka2023lexint}.
The major drawback is that the required number of polynomial Krylov iterations increases with $\|h_i\widetilde{\bm{A}}\|_2$, i.e., the time step size as well as the spectral radius of the discrete linear differential operator $\bm{A}$ that often behaves proportionally to the problem size, cf.\ \Cref{sec:odes}.
Since for a diagonalizable matrix the computation of the matrix exponential is equivalent to exponentiating its eigenvalues \cite{higham2008functions}, the oscillatory properties of polynomial approximations demand higher polynomial degrees when the approximation interval is increased \cite{trefethen2019approximation}.
This prevents a linear scaling of the runtime of polynomial Krylov subspace methods with respect to the problem size, cf.\ \Cref{fig:iter_runtime_2D_AC}.
The study of uniform rational (best) approximations of $e^{-x}$ on the unbounded positive (or equivalently, $e^x$ on the negative) semi-axis \cite{cody1969chebyshev,carpenter1984extended,gallopoulos1992efficient,trefethen2019approximation} instead promises the convergence of rational approximations independent of the length of positive approximation intervals.

The above can be viewed as one reason that rational Krylov subspace methods, which the improved approximation quality of rational functions is built into, have been studied intensively in recent years, cf.\ e.g., \cite{ruhe1984rational,ruhe1994rational,moret2004rd,van2006preconditioning,popolizio2008acceleration,guttel2013rational,gockler2013convergence,gander2013paraexp,zhuang2013power,gockler2014uniform,ragni2014rational,wang2017exploring,bertaccini2021computing}.
The space of rational functions representable by a rational Krylov subspace $\mathcal{Q}_m(\widetilde{\bm{A}}, \widetilde{\bm{c}})$ crucially depends on the choice of poles \mbox{$\xi_j\in\mathbb{C}\cup\{\infty\}, j=1, \dots , m-1$} and optimal pole selection strategies remain an active field of research, cf.\ e.g., \cite{druskin2011adaptive,guttel2013rational,borner2015three,berljafa2017rkfit,massei2021rational}.
The improved approximation quality of $\mathcal{Q}_m$, however, comes at the cost of the requirement of a linear system solve in each iteration that crucially affects the computational efficiency of rational Krylov methods.
Depending on the choice of poles, the encountered sequence of shifted linear systems may contain complex-valued and indefinite problems, which complicates their efficient solution.
Besides direct approaches based on the LU or Cholesky decomposition for admissible problem sizes \cite{golub2013matrix}, we use iterative solvers \cite{saad2003iterative} preconditioned with algebraic multigrid \cite{ruge1987algebraic,falgout2006introduction,notay2010aggregation,notay2012aggregation,napov2012algebraic}, which allows a near-linear scaling of the runtime in the problem size.

A popular, computationally less demanding special case of rational Krylov subspaces are shift \& invert Krylov subspaces  \cite{moret2004rd,van2006preconditioning}, which use only one single repeated pole.
These methods have been leveraged extensively in recent years for matrix function approximation in general and exponential integration in particular \cite{popolizio2008acceleration,gockler2013convergence,gander2013paraexp,zhuang2013power,gockler2014uniform,ragni2014rational,wang2017exploring}.

In this work, we combine the power of rational Krylov subspace methods with multiple complex-valued poles with the efficient implementation of exponential integrators as well as the adaptive Krylov subspace sizes used in state-of-the-art methods for exponential integration.
Our strategy is made possible by a novel a-posteriori error estimate to rational Krylov subspace approximations to $e^{h_i\widetilde{\bm{A}}}\widetilde{\bm{c}}$ presented in \Cref{prop:rat_krylov_error}.
Computational efficiency is ensured by optimal pole selection and linear system solving techniques discussed in \Cref{sec:rk_poles,sec:rk_linear_system_solves}, respectively.
\Cref{fig:iter_runtime_2D_AC} illustrates both the main motivation and the main contribution of this paper, namely approximately constant rational Krylov subspace iteration numbers leading to a near-linear scaling of the runtime for the solution of large stiff systems of ODEs.
We implement our method in the routine $(RK)^2$EXPINT (Rational Krylov Runge--Kutta exponential integrators, \texttt{rk2expint}) and Matlab codes are publicly available under \url{https://github.com/KBergermann/rk2expint}.

We test our method on two semi-linear parabolic PDEs:
the Allen--Cahn and Gierer--Meinhardt equations.
As discrete linear differential operators we choose finite difference discretizations of the two-dimensional continuous Laplacian operator as well as the (unnormalized) graph Laplacian of inherently discrete network domains.
Numerical experiments show that $(RK)^2$EXPINT is capable of outperforming state-of-the-art methodology for sufficiently large spectral radii of the discrete linear differential operators, i.e., large problem sizes or large time step sizes.

\begin{figure}
	\centering
	\subfloat[Krylov iteration numbers]{
		\includegraphics[width=.43\textwidth]{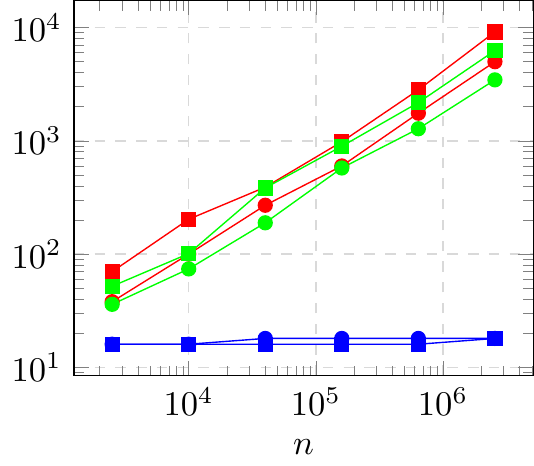}
	}
	\hfill
	\subfloat[Runtime in seconds]{
		\includegraphics[width=.43\textwidth]{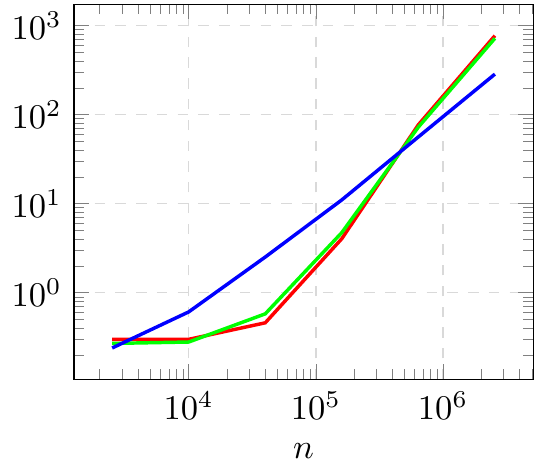}
	}
	\vspace{10pt}
	\includegraphics[width=.8\textwidth]{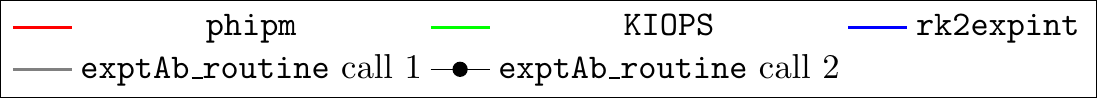}
	\vspace{-10pt}
	\caption{Comparison of polynomial (\texttt{phipm}, \texttt{KIOPS}) and rational (\texttt{rk2expint}) Krylov iteration numbers per evaluation of $e^{h_i \widetilde{\bm{A}}}\widetilde{\bm{c}}$ and total runtimes of solving the 2D Allen--Cahn equation with homogeneous Neumann boundary conditions.
	The total number of grid points is denoted by $n$.
	We use the \texttt{SW2} exponential Runge--Kutta integrator, which requires two evaluations of $e^{h_i \widetilde{\bm{A}}}\widetilde{\bm{c}}$ per time step.
	For \texttt{rk2expint}, we use complex-valued $(35,30)$ \texttt{RKFIT} poles fitted on the interval $[0,10^6]$ as well as preconditioned iterative linear system solves.}\label{fig:iter_runtime_2D_AC}
\end{figure}

The rest of this paper is organized as follows.
\Cref{sec:odes} provides details on the large stiff systems of ODEs.
In \Cref{sec:expint}, we briefly introduce exponential Runge--Kutta integrators with a special focus on their efficient numerical implementation in \Cref{sec:expint_implementation}.
\Cref{sec:rk} starts by introducing rational Krylov subspace methods before discussing our choice of pole selection and the solution of the sequence of shifted linear systems.
In \Cref{sec:rk_error}, we prove our novel a-posteriori rational Krylov error estimate to quantities $e^{h_i\widetilde{\bm{A}}}\widetilde{\bm{c}}$ before summarizing our proposed method in an algorithm in \Cref{sec:algorithm}.
Finally, \Cref{sec:experiments} presents numerical experiments.

\paragraph{Notation}
To be consistent with as much notation from the literature as possible, we made the following notational choices:
we denote our discrete linear differential operators by the symmetric positive semi-definite matrix $\bm{A}$, the slightly enlarged version of $\bm{A}$ by the nonsymmetric negative semi-definite matrix $\widetilde{\bm{A}}$, and the poles $\xi_j$ are chosen to approximate the function $e^x$ on the negative real semi-axis.
The latter choice deviates from the standard notation that approximates $e^{-x}$ on the positive real semi-axis, but could not be avoided.

\section{Stiff systems of ODEs}\label{sec:odes}

In this paper, we consider the solution of large and stiff systems of ODEs of the form
\begin{equation}\label{eq:ode_system}
\frac{\partial \bm{u}(t)}{\partial t} = F(t,\bm{u}(t)) =  - \bm{A} \bm{u}(t) + g(t, \bm{u}(t)), \quad \bm{u}(0) = \bm{u}_0,
\end{equation}
with $\bm{u}\colon [0,T] \mapsto \R^n$ the desired solution on the time interval $[0,T]$, $F: [0,T] \times \R^n \rightarrow \R^n$ the right-hand side, $\bm{A}\in\R^{n \times n}$ a discrete linear differential operator, and $g: [0,T] \times \R^n \rightarrow \R^n$ a semi-linear function, i.e., $g$ is generally non-linear in $\bm{u}$ but contains no derivatives of $\bm{u}$.
The initial condition $\bm{u}_0\in\R^n$ together with appropriate boundary conditions built into $\bm{A}$ complete the initial boundary value problem.
In this paper, we restrict ourselves to symmetric positive semi-definite discrete linear differential operators $\bm{A}$.

Problems of the form \cref{eq:ode_system} arise in a multitude of applications including semi-discretized semi-linear parabolic PDEs
\begin{equation}\label{eq:pde}
\frac{\partial u(t,\bm{x})}{\partial t} = - \mathcal{A} u(t,\bm{x}) + g(t, u(t,\bm{x})), \quad u(0,\bm{x}) = u_0(\bm{x}),
\end{equation}
where $u: [0,T] \times \Omega \rightarrow \R$ is defined on the spatial domain $\Omega \subset \R^d$, $[0,T]$ denotes the time interval, $\mathcal{A}$ a linear differential operator between and $g$ a semi-linear function from appropriate function spaces.
In this paper, we restrict ourselves to the Laplacian operator $\mathcal{A}=-\Delta$ and its standard finite difference discretization.
\begin{definition}[\cite{strang2006linear}]\label{def:finite_differences}
The real symmetric finite difference matrix of an equispaced triangulation of a spatial interval of length $L_x$ by $n_x$ grid points, i.e., spatial step size $h_x = \frac{L_x}{n_x}$ is defined as
\begin{equation}\label{eq:fd_matrix}
\bm{T}_{n_x} = \frac{1}{h_x^2}\mathrm{tridiag}(-1,2,-1)
\in\R^{n_x \times n_x}.
\end{equation}
Dirichlet, Neumann, or periodic boundary conditions can be built into $\bm{T}_{n_x}$ by slight modification of the first and last row.
With this, we obtain the finite difference discretization of the two-dimensional continuous Laplacian operator as
\begin{equation*}
 \bm{A}=\bm{T}_{n_x} \otimes \bm{I} + \bm{I} \otimes \bm{T}_{n_x}.
\end{equation*}
\end{definition}
Finite difference matrices have been studied intensively over past decades and the full eigendecomposition $\bm{T}_{n_x} = \bm{\Phi}\bm{\Lambda}\bm{\Phi}^T$ of \cref{eq:fd_matrix} is known analytically, allowing efficient solution strategies, e.g., based on fast Fourier or discrete cosine transforms.

\begin{proposition}[\cite{strang2006linear,golub2013matrix}]\label{prop:fd_spectra}
The spectrum of $\bm{T}_{n_x}$ is contained in the real interval \mbox{$\Sigma = \frac{1}{h_x^2}[0,4]$} for all boundary conditions.
Due to the properties of the Kronecker product, the spectrum of the two-dimensional finite difference Laplacian is contained in $\Sigma=\frac{2}{h_x^2}[0,4]$.
\end{proposition}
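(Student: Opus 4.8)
The plan is to reduce both claims to elementary spectral properties of the scaled tridiagonal matrix $h_x^2\bm{T}_{n_x}$, which has integer entries and is real symmetric by \Cref{def:finite_differences}, and then to lift the one-dimensional bound to $\bm{A}$ via its Kronecker-sum structure. For the first claim, symmetry already gives a real spectrum, so it suffices to confine the spectrum of $h_x^2\bm{T}_{n_x}$ to $[0,4]$ before undoing the $1/h_x^2$ scaling. I would apply Gershgorin's circle theorem: every eigenvalue $\mu$ of $h_x^2\bm{T}_{n_x}$ lies in $[a_{ii}-r_i,\,a_{ii}+r_i]$ for some $i$, where $r_i=\sum_{j\neq i}|a_{ij}|$, the discs collapsing to real intervals because the matrix is symmetric. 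An interior row has $a_{ii}=2$ and $r_i=2$, hence $\mu\in[0,4]$; it then remains to inspect the first and last rows for each admissible boundary condition from \Cref{def:finite_differences}. For Dirichlet the boundary row is $(2,-1,0,\dots)$, giving $\mu\in[1,3]$; for the symmetric Neumann stencil it is $(1,-1,0,\dots)$, giving $\mu\in[0,2]$; and for periodic boundary conditions $h_x^2\bm{T}_{n_x}$ is circulant with every row a cyclic shift of $(2,-1,0,\dots,0,-1)$, giving $\mu\in[0,4]$. In all cases $\mathrm{spec}(h_x^2\bm{T}_{n_x})\subseteq[0,4]$, so $\mathrm{spec}(\bm{T}_{n_x})\subseteq\tfrac{1}{h_x^2}[0,4]=\Sigma$. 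Equivalently, one may invoke the analytic eigendecomposition $\bm{T}_{n_x}=\bm{\Phi}\bm{\Lambda}\bm{\Phi}^T$ mentioned after \Cref{def:finite_differences}: in each of the three cases the eigenvalues take the form $\tfrac{1}{h_x^2}(2-2\cos\theta_k)$ for real angles $\theta_k$, which again lie in $\tfrac{1}{h_x^2}[0,4]$.

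For the two-dimensional operator, I would exploit that $\bm{A}=\bm{T}_{n_x}\otimes\bm{I}+\bm{I}\otimes\bm{T}_{n_x}$ is a Kronecker sum. From the orthogonal eigendecomposition $\bm{T}_{n_x}=\bm{\Phi}\bm{\Lambda}\bm{\Phi}^T$ one obtains $\bm{A}=(\bm{\Phi}\otimes\bm{\Phi})\bigl(\bm{\Lambda}\otimes\bm{I}+\bm{I}\otimes\bm{\Lambda}\bigr)(\bm{\Phi}\otimes\bm{\Phi})^T$, and $\bm{\Lambda}\otimes\bm{I}+\bm{I}\otimes\bm{\Lambda}$ is diagonal with entries $\lambda_i+\lambda_j$ for $1\le i,j\le n_x$, where $\lambda_1,\dots,\lambda_{n_x}$ are the diagonal entries of $\bm{\Lambda}$. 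Hence the spectrum of $\bm{A}$ consists precisely of the pairwise sums of eigenvalues of $\bm{T}_{n_x}$, which by the first part lie in $\tfrac{1}{h_x^2}[0,4]+\tfrac{1}{h_x^2}[0,4]=\tfrac{1}{h_x^2}[0,8]=\tfrac{2}{h_x^2}[0,4]$, as asserted.

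I expect the only mild obstacle to be the clause ``for all boundary conditions'': one has to be explicit that the admissible stencils are exactly the Dirichlet, Neumann, and periodic modifications of the first and last rows described in \Cref{def:finite_differences} and to carry out the short case distinction above, since an ad hoc row modification could in principle break the weak diagonal dominance that keeps the Gershgorin interval inside $[0,4]$. Beyond that, the argument is routine linear algebra.
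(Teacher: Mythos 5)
Your proposal is correct. The paper does not actually prove this proposition---it is stated as a known result with citations to Strang and Golub--Van Loan---so there is no in-paper argument to compare against; your Gershgorin bound on the scaled tridiagonal matrix (with the case distinction over the boundary rows) combined with the standard Kronecker-sum eigenvalue identity $\lambda_i+\lambda_j$ is exactly the routine argument those references support, and the explicit eigenvalue form $\tfrac{1}{h_x^2}(2-2\cos\theta_k)$ you mention is the alternative the paper itself alludes to when it notes that the full eigendecomposition of $\bm{T}_{n_x}$ is known analytically.
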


Although not explored in this paper, we remark that all methods should equally apply to finite element discretizations due to similar properties of the stiffness matrix.

The second application of interest to this paper is the simulation of dynamical processes on graphs/networks.

\begin{definition}
A graph $\mathcal{G} = (\mathcal{V}, \mathcal{E})$ is defined by a node set $\mathcal{V}$ with $|\mathcal{V}|=n$ and an edge set $\mathcal{E} \subset \mathcal{V} \times \mathcal{V}$.
We consider undirected and possibly weighted edges leading to the graph's symmetric adjacency matrix $\bm{W}\in\R_{\geq 0}^{n \times n}$ with
\begin{equation*}
\bm{W}_{ij} = 
\begin{cases}
w_{ij}>0 & \text{if there is an edge between nodes $i$ and $j$,}\\
0 & \text{otherwise,}
\end{cases}
\end{equation*}
for $i,j=1, \dots , n$.
Furthermore, we define the diagonal degree matrix $\bm{D} = \text{diag}(\bm{W}\bm{1})$ with $\bm{1}\in\R^n$ the vector of all ones.
Then, the (unnormalized) graph Laplacian operator is given by
\begin{equation*}
\bm{A} = \bm{L} = \bm{D} - \bm{W}.
\end{equation*}
\end{definition}

Note that with appropriately chosen edge weights, the graph Laplacian $\bm{L}$ coincides with the $d$-dimensional finite difference Laplacian on graphs representing $d$-dimensional regular grids.
We know from spectral graph theory that the spectrum of $\bm{L}$ for unweighted graphs is given by $\Sigma=[0, n]$ \cite{chung1997spectral}.
Positive weights preserve the positive semi-definiteness of $\bm{L}$, leading to the following summarizing remark.

\begin{remark}\label{rem:Aspsd}
The discrete linear differential operators $\bm{A}$ considered in this paper are symmetric positive semi-definite M-matrices, cf.~\cite[Chapter 6]{berman1994nonnegative}.
\end{remark}

\begin{remark}
	Throughout this manuscript, $n$ denotes the matrix size of the discrete linear differential operator $\bm{A}$, i.e., the total number of grid points for finite difference discretizations or the number of graph nodes.
	In the finite difference case, $n_x$ denotes the number of grid points in each spatial coordinate direction.
\end{remark}

The solution of problems of the form \cref{eq:ode_system} is often complicated by stiffness -- a phenomenon of differential equations that is typically identified with characteristics such as a large stiffness ratio, i.e., a large ratio of the absolute values of the largest and smallest eigenvalue of $\bm{A}$, different decay ratios of components of the solution, or the fact that implicit numerical time integration methods work much better than explicit ones \cite{lambert1991numerical}.

\section{Exponential Runge--Kutta integrators}\label{sec:expint}

Ideas for the numerical solution of differential equations date back at least to Euler $250$ years ago and nowadays a plethora of well-studied numerical time integration techniques is available in the literature, cf.\ e.g., \cite{hairer1991solving,lambert1991numerical}.
Runge--Kutta methods rank among the most popular such techniques.
Due to favourable stability properties it is well-known that implicit Runge--Kutta methods are much better suited for solving stiff ODEs than explicit ones.

In this paper, however, we consider exponential integrators, which owe their name to the matrix exponential propagator $e^{-t\bm{A}}$ that solves the homogeneous equation \cref{eq:ode_system}, i.e., $\frac{\partial \bm{u}(t)}{\partial t} = - \bm{A} \bm{u}(t)$ exactly for all $t\in[0,T]$ via the matrix-vector product $\bm{u}(t) = e^{-t\bm{A}}\bm{u}_0$.
The uniform boundedness and the capability of the exponential propagator to fully resolve linear oscillations makes exponential integrators successful methods for the solution of stiff and highly oscillatory systems of ODEs \cite{hochbruck2010exponential}.
In particular, we choose the class of explicit exponential Runge--Kutta methods, which is designed for problems with a natural splitting of the right-hand side of \cref{eq:ode_system} into linear and non-linear part \cite{hochbruck2005explicit,hochbruck2005exponential}.
For more general problems with general right-hand sides $F(t,\bm{u}(t))$, other approaches such as exponential Rosenbrock \cite{hochbruck2010exponential} or exponential propagation iterative Runge--Kutta (EPIRK) methods \cite{tokman2006efficient,tokman2011new} have been proposed, which obtain the splitting in \cref{eq:ode_system} by local linearizations of $F(t,\bm{u}(t))$ along the trajectory of the solution $\bm{u}(t)$.

The construction of explicit exponential Runge--Kutta integrators relies on the variation-of-constants formula
\begin{equation}\label{eq:expint_time_step}
\bm{u}_{i+1} := \bm{u}(t+h_i) = e^{-h_i \bm{A}}\bm{u}(t) + \int_{0}^{h_i} e^{-(h_i - \tau) \bm{A}} g(t+\tau,\bm{u}(t+\tau)) d\tau,
\end{equation}
which can be interpreted as integrating the linear part of \cref{eq:ode_system} on the time interval $[t, t+h_i]$ exactly and separately approximating the remainder integral by exponential quadrature.
Assuming $\bm{g}=g(\tau,\bm{u}(t))$ constant leads to the exponential Euler method, which involves the function $\varphi_1(z) = \frac{e^z - 1}{z}$ \cite{hochbruck2010exponential}.
More sophisticated exponential quadrature rules lead to schemes including further $\varphi$-functions.

\begin{definition}\label{def:phi_functions}
The $(k+1)$st $\varphi$-function is defined via the power series
\begin{equation*}
\varphi_{k+1}(z) = \sum_{j=0}^\infty \frac{z^j}{(j+k+1)!},
\end{equation*}
or the recurrence relation 
\begin{equation*}
\varphi_{k+1}(z) = \frac{\varphi_{k}(z)-\varphi_{k}(0)}{z}, \quad \varphi_{k}(0) = 1/k!, \quad \varphi_0(z)=e^z
\end{equation*}
\end{definition}

One can now employ the idea of Runge--Kutta methods and introduce $s$ internal stages $t+c_1 h_i, \dots , t+c_s h_i$ with $c_j\in[0,1]$ for $1 \leq j \leq s$ into the time interval $[t,t+h_i]$ leading to schemes of the form
\begin{align}
\bm{u}_{i+1} & = \chi(-h_i\bm{A})\bm{u}_i + h_i \sum_{j=1}^s b_j(-h_i \bm{A})\bm{G}_{ij}.\label{eq:exprk1}\\
\bm{U}_{ij} & = \chi_j(-h_i\bm{A})\bm{u}_i + h_i \sum_{k=1}^s a_{jk}(-h_i \bm{A})\bm{G}_{ik},\label{eq:exprk2}\\
\bm{G}_{ik} & = g(t_i + c_k h_i, \bm{U}_{ik}),\label{eq:exprk3}
\end{align}
where $\chi$, $\chi_j$, $a_{jk}$, and $b_j$ are $\varphi$-functions.
Choosing $\chi(z)=e^z$ and $\chi_j(z)=e^{c_j z}$, one can derive stiff order conditions that allow the construction of exponential integrators with a convergence order independent of the problem's stiffness \cite{hochbruck2005explicit,hochbruck2005exponential}.
Note that the classical convergence order is an upper bound to the stiff order.
As in Runge--Kutta methods, one can use Butcher tableaus to define a given integrator, cf.\ \Cref{tab:butcher_tableau}.
An example of the stage $3$, stiff order $3$ method \texttt{ETD3RK} \cite{cox2002exponential} is given in \Cref{tab:butcher_tableau_ETD3RK}.
Here, $\varphi_{j,k}=\varphi_j(-c_k h_i \bm{A})$ and $\varphi_{j}=\varphi_j(-h_i \bm{A})$.
For details on the construction and analysis of explicit exponential Runge--Kutta integrators, we refer to \cite{hochbruck2005explicit,hochbruck2005exponential,hochbruck2010exponential} and references therein.

\begin{table}
\centering
\begin{tabular}{c|cccc}
$c_1$ &&&&\\
$c_2$ & $a_{2,1}(-h_i \bm{A})$ &&&\\
\vdots & \vdots & $\ddots$ &&\\
$c_s$ & $a_{s,1}(-h_i \bm{A})$ & $\hdots$ & $a_{s,s-1}(-h_i \bm{A})$&\\\hline
& $b_1(-h_i \bm{A})$ & $\hdots$ & $b_{s-1}(-h_i \bm{A})$ & $b_s(-h_i \bm{A})$
\end{tabular}
\caption{Butcher tableau of a general stage $s$ explicit exponential Runge--Kutta integrator.}\label{tab:butcher_tableau}
\end{table}

\begin{table}
	\centering
	\begin{tabular}{c|ccc}
		$0$ &&&\\
		$\frac{1}{2}$ & $\frac{1}{2}\varphi_{1,2}$&&\\
		$1$ & $-\varphi_{1,3}$ & $2\varphi_{1,3}$&\\\hline
		& $4\varphi_3 - 3\varphi_2+\varphi_1$ & $-8\varphi_3 + 4\varphi_2$ & $4\varphi_3 - \varphi_2$
	\end{tabular}
	\caption{Butcher tableau of the stage $3$, stiff order $3$ integrator \texttt{ETD3RK} \cite{cox2002exponential}.}\label{tab:butcher_tableau_ETD3RK}
\end{table}

\subsection{Efficient implementation}\label{sec:expint_implementation}

\Cref{eq:exprk1,eq:exprk2,eq:exprk3} show that each time step of an exponential integrator requires the evaluation of linear combinations of the action of $\varphi$-functions on vectors that depend on the trajectory of the ODE solution.
For the problem of computing $f(\bm{A})\bm{b}$, highly efficient methods based on Krylov subspace methods are available \cite{higham2008functions,golub2013matrix}.
They are based on constructing an orthonormal basis $\bm{V}_m$ of the polynomial Krylov subspace
\begin{equation*}
\mathcal{K}_m(\bm{A},\bm{b}) = \text{span}\{\bm{b}, \bm{A}\bm{b}, \dots , \bm{A}^{m-1}\bm{b}\}
\end{equation*}
leading to the approximation
\begin{equation}\label{eq:fAb_polynomial}
f(\bm{A})\bm{b} \approx \|\bm{b}\|_2 \bm{V}_m f(\bm{H}_m) \bm{e}_1,
\end{equation}
where $\bm{H}_m\in\R^{m \times m}$ is the Hessenberg reduction of $\bm{A}$ in $\mathcal{K}_m(\bm{A},\bm{b})$ and $\bm{e}_1\in\R^m$ denotes the first unit vector.
Equality in \cref{eq:fAb_polynomial} holds if $m$ is greater or equal to the invariance index of $\mathcal{K}_m$.
The main computational cost of such methods are matrix-vector products with $\bm{A}$ and applied to each individual $\varphi$-function this approach still proves computationally burdensome as, e.g., \texttt{ETD3RK} defined in \Cref{tab:butcher_tableau_ETD3RK} would require the computation of $10$ such quantities per time step.

A series of results by Saad \cite[Proposition 2.1]{saad1992analysis}, Sidje \cite[Theorem 1]{sidje1998expokit}, and Al-Mohy and Higham \cite[Theorem 2.1]{al2011computing} shows that the task can be reduced to the approximation of fewer quantities of the form $f(\bm{A})\bm{b}$.
We restate the special case $l=0$ of \cite[Theorem  2.1]{al2011computing} relevant to our problem in the notation defined above and formulated for the more general complex-valued case.

\begin{theorem}[Al-Mohy, Higham \cite{al2011computing}]\label{thm:al-mohy_higham}
	Let $\widetilde{\bm{A}} = \begin{pmatrix}
	-\bm{A} & \bm{C}\\
	\bm{0} & \bm{J}_p
	\end{pmatrix}\in\C^{(n+p) \times (n+p)}$,
	where $\bm{C}=[\bm{c}_p, \dots , \bm{c}_1]\in\C^{n \times p}$ and $\bm{J}_p\in\C^{p \times p}$ a Jordan block to the eigenvalue $0$.
	Furthermore, we define the matrix exponential $\bm{X}=e^{h_i \widetilde{\bm{A}}}$ as well as the vector $\widetilde{\bm{c}} := \begin{pmatrix}
	\bm{c}_0\\
	\bm{e}_p
	\end{pmatrix}\in\C^{n+p}$.
	Then, we have \mbox{$\bm{X}(1:n, n+p) = \sum_{k=1}^p h_i^k \varphi_k(-h_i \bm{A})\bm{c}_k$} and
	\begin{equation*}
	\bm{X} \widetilde{\bm{c}}
	= e^{h_i\widetilde{\bm{A}}} \widetilde{\bm{c}}
	= \begin{pmatrix}
	\sum_{k=0}^p h_i^k \varphi_k(-h_i \bm{A})\bm{c}_k\\
	e^{\bm{J}_p}\bm{e}_p
	\end{pmatrix}
	:=\widetilde{\bm{b}}\in\C^{n+p}.
	\end{equation*}
\end{theorem}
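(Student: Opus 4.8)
The plan is to read the two identities off the block upper-triangular structure of $\widetilde{\bm A}$ together with a short power-series computation, using only the power-series definition of the $\varphi$-functions.

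First I would record that, since $\widetilde{\bm A}$ is block upper triangular, so is every power: by an easy induction
\[
\widetilde{\bm A}^{\,k} = \begin{pmatrix} (-\bm A)^k & \bm S_k \\ \bm 0 & \bm J_p^{\,k} \end{pmatrix},\qquad \bm S_k = \sum_{i=0}^{k-1}(-\bm A)^i\,\bm C\,\bm J_p^{\,k-1-i}\quad(\bm S_0=\bm 0).
\]
Summing $e^{h_i\widetilde{\bm A}}=\sum_{k\geq 0}\tfrac{h_i^k}{k!}\widetilde{\bm A}^{\,k}$ blockwise gives
\[
\bm X = e^{h_i\widetilde{\bm A}} = \begin{pmatrix} e^{-h_i\bm A} & \bm K \\ \bm 0 & e^{h_i\bm J_p} \end{pmatrix},\qquad \bm K=\sum_{k\geq 1}\frac{h_i^k}{k!}\,\bm S_k .
\]
Thus the lower-right block (hence the second block of both claimed identities, namely the last column of $e^{h_i\bm J_p}$) is immediate, and the whole problem reduces to identifying the last column $\bm K\bm e_p$, which is exactly $\bm X(1:n,n+p)$.

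Next I would use that $\bm J_p$ is the nilpotent shift: $\bm J_p\bm e_1=\bm 0$ and $\bm J_p\bm e_j=\bm e_{j-1}$ for $j\geq 2$, so $\bm J_p^{\,m}\bm e_p=\bm e_{p-m}$ for $0\leq m\leq p-1$ and $\bm J_p^{\,m}\bm e_p=\bm 0$ for $m\geq p$. Combined with the reversed column ordering $\bm C=[\bm c_p,\dots,\bm c_1]$, i.e.\ $\bm C\bm e_\ell=\bm c_{p-\ell+1}$, this yields $\bm C\,\bm J_p^{\,k-1-i}\bm e_p=\bm c_{k-i}$ when $k-1-i\leq p-1$ and $\bm 0$ otherwise, hence $\bm S_k\bm e_p=\sum_{i=\max(0,\,k-p)}^{k-1}(-\bm A)^i\bm c_{k-i}$. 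Substituting into $\bm K\bm e_p=\sum_{k\geq 1}\tfrac{h_i^k}{k!}\bm S_k\bm e_p$ and re-indexing the double sum by $j:=k-i\in\{1,\dots,p\}$ (with $i\geq 0$ then unconstrained above), the coefficient of each $\bm c_j$ becomes $h_i^{\,j}\sum_{i\geq 0}\tfrac{(-h_i\bm A)^i}{(i+j)!}$, which equals $h_i^{\,j}\varphi_j(-h_i\bm A)$ by the series in \Cref{def:phi_functions}. This proves $\bm X(1:n,n+p)=\bm K\bm e_p=\sum_{k=1}^p h_i^{\,k}\varphi_k(-h_i\bm A)\bm c_k$, and the formula for $\bm X\widetilde{\bm c}$ then follows from the block product $\bm X\widetilde{\bm c}=\bigl(e^{-h_i\bm A}\bm c_0+\bm K\bm e_p;\ e^{h_i\bm J_p}\bm e_p\bigr)$ once one notes $\varphi_0(z)=e^z$, so that $e^{-h_i\bm A}\bm c_0=h_i^{\,0}\varphi_0(-h_i\bm A)\bm c_0$ supplies the $k=0$ term.

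The block-triangular bookkeeping is routine; the step I expect to need the most care is the index reorganization above, where the reversed column order of $\bm C$, the action of the shift powers $\bm J_p^{\,m}$ on $\bm e_p$, and the collapse of the double sum over $(k,i)$ to a single sum over $i$ must line up exactly so that the surviving coefficient of $\bm c_j$ is $\sum_{\ell\geq 0} z^\ell/(\ell+j)!$ on the nose. As a cross-check one can argue analytically instead: writing $e^{t\widetilde{\bm A}}$ with $(1,2)$-block $\bm K(t)$, the identity $\tfrac{d}{dt}e^{t\widetilde{\bm A}}=\widetilde{\bm A}e^{t\widetilde{\bm A}}$ forces $\bm K'(t)=-\bm A\,\bm K(t)+\bm C e^{t\bm J_p}$ with $\bm K(0)=\bm 0$, whence $\bm K(h_i)=\int_0^{h_i}e^{-(h_i-s)\bm A}\bm C e^{s\bm J_p}\,ds$, and one finishes with the integral representation $\varphi_j(z)=\tfrac{1}{(j-1)!}\int_0^1 e^{(1-\theta)z}\theta^{\,j-1}\,d\theta$ after the substitution $s=h_i\theta$. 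Note that neither route uses symmetry or definiteness of $\bm A$, so the statement holds for general complex $\bm A$ as asserted.
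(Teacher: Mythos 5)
Your proof is correct, and it is necessarily a different ``route'' in the trivial sense that the paper offers no proof at all: \Cref{thm:al-mohy_higham} is restated verbatim from Al-Mohy and Higham \cite{al2011computing} (special case $l=0$ of their Theorem~2.1) and simply cited. Your argument is a clean, self-contained derivation: the block-triangular power formula $\widetilde{\bm{A}}^k$ with $(1,2)$-block $\bm{S}_k=\sum_{i=0}^{k-1}(-\bm{A})^i\bm{C}\bm{J}_p^{\,k-1-i}$ is right, the action of the nilpotent shift on $\bm{e}_p$ and the reversed column ordering of $\bm{C}$ combine exactly as you say to give $\bm{S}_k\bm{e}_p=\sum_{i=\max(0,k-p)}^{k-1}(-\bm{A})^i\bm{c}_{k-i}$, and the reindexing $j=k-i$ produces the coefficient $h_i^{\,j}\sum_{\ell\ge 0}(-h_i\bm{A})^\ell/(\ell+j)!=h_i^{\,j}\varphi_j(-h_i\bm{A})$ on the nose, matching \Cref{def:phi_functions}. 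The variation-of-constants cross-check via $\bm{K}(h_i)=\int_0^{h_i}e^{-(h_i-s)\bm{A}}\bm{C}e^{s\bm{J}_p}\,ds$ is essentially the argument used in the original reference, so both standard proof strategies are covered. One point worth flagging explicitly: your computation yields $e^{h_i\bm{J}_p}\bm{e}_p$ for the trailing block, whereas the statement as printed in the paper reads $e^{\bm{J}_p}\bm{e}_p$; since the lower-right block of $e^{h_i\widetilde{\bm{A}}}$ is $e^{h_i\bm{J}_p}$, your version is the correct one for $h_i\neq 1$ (the paper's form is inherited from the unscaled statement in \cite{al2011computing} and is a minor typo here, immaterial to the algorithm since only the first $n$ components of $\widetilde{\bm{b}}$ are used). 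You are also right that no symmetry or definiteness of $\bm{A}$ is needed.
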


 \begin{remark}
 Since $\widetilde{\bm{A}}$ defined in \Cref{thm:al-mohy_higham} is upper block triangular, its spectrum is the union of the spectrum of $-\bm{A}$ with the eigenvalue $0$ with multiplicity $p$ independently of the matrix $\bm{C}$, making $\widetilde{\bm{A}}$ negative semi-definite.
 \end{remark}

 With \Cref{thm:al-mohy_higham}, the task for a given exponential Runge--Kutta integrator becomes grouping the terms from \cref{eq:exprk1,eq:exprk2,eq:exprk3} such that all required linear combinations of $\varphi$-functions can be obtained by as few quantities $e^{h_i\widetilde{\bm{A}}} \widetilde{\bm{c}}$ as possible.
 
 This idea has been exploited in the software package \texttt{phipm} \cite{niesen2012algorithm} as well as a later package \texttt{KIOPS} \cite{gaudreault2018kiops}, which provides a number of modifications to \texttt{phipm}.
 The common idea of both packages is to apply polynomial Krylov subspace methods discussed above to obtain approximations
 \begin{equation}\label{eq:polynomial_krylov_approximation}
  e^{h_i\widetilde{\bm{A}}} \widetilde{\bm{c}} \approx \|\widetilde{\bm{c}}\|_2 \bm{V}_m e^{h_i\bm{H}_m} \bm{e}_1.
 \end{equation}
 The matrix exponential of the small Hessenberg matrix $\bm{H}_m$ can be computed efficiently by various means \cite{moler1978nineteen,moler2003nineteen}, with the current Matlab standard\footnote{as implemented in the \texttt{expm} function in Matlab version R2020b} being rational Pad\'e approximations computed by the scaling and squaring algorithm \cite{higham2005scaling,al2010new}.

 Both \texttt{phipm} and \texttt{KIOPS} approximate \cref{eq:polynomial_krylov_approximation} to a user-specified tolerance in an adaptive way.
 The adaptivity relies on an a-posteriori error estimate to \cref{eq:polynomial_krylov_approximation} proposed by Saad \cite[Theorem 5.1]{saad1992analysis} who proved the first version of \Cref{thm:al-mohy_higham} not in the context of exponential integration but of analyzing polynomial Krylov subspace approximations to the action of the matrix exponential on vectors.
 If \cref{eq:polynomial_krylov_approximation} does not yet meet the tolerance, the approximation can be improved by either increasing the polynomial Krylov subspace size $m$ or sub-stepping the time interval $[0, h_i]$.
 The sub-stepping is motivated by interpreting $e^{h_i\widetilde{\bm{A}}} \widetilde{\bm{c}}$ as the solution to the differential equation
 \begin{equation*}
 \frac{\partial \bm{u}}{\partial t} = \widetilde{\bm{A}}\bm{u}, \quad \bm{u}_0=\widetilde{\bm{c}},
 \end{equation*}
 on the time interval $[0, h_i]$ and realizing that
 \begin{equation}\label{eq:time_interval_substepping}
  e^{h_i\widetilde{\bm{A}}} \widetilde{\bm{c}} = e^{(h_{i1}+h_{i2}+\dots+h_{ik})\widetilde{\bm{A}}} \widetilde{\bm{c}} = e^{h_{ik}\widetilde{\bm{A}}}(\cdots(e^{h_{i2}\widetilde{\bm{A}}}(e^{h_{i1}\widetilde{\bm{A}}}\widetilde{\bm{c}}))),
 \end{equation}
 where $h_i=h_{i1}+h_{i2}+\dots+h_{ik}$ for $h_{ij}>0$ and $j=1, \dots , k$.
 
 The polynomial Krylov approximation of the action of $e^{h_{ij}\widetilde{\bm{A}}}$ on a vector to a given tolerance can be achieved with a lower polynomial degree if $\|h_{ij}\widetilde{\bm{A}}\|_2 < \|h_{i}\widetilde{\bm{A}}\|_2$ for $h_{ij}<h_i$, cf.\ the discussion in \Cref{sec:intro}.
 We refer to \cite{niesen2012algorithm,gaudreault2018kiops} for details on how the adaptivity is implemented in \texttt{phipm} and \texttt{KIOPS} with the goal of minimizing the number of matrix-vector products.
 
 \begin{sloppypar}
 As already mentioned at the beginning of \Cref{sec:expint}, alternative approaches to exponential Runge--Kutta methods are given by exponential Rosenbrock \cite{hochbruck2010exponential} or EPIRK methods \cite{tokman2006efficient,tokman2011new}, which rely on local linearizations of general right-hand sides $F(t,\bm{u}(t))$ in \cref{eq:ode_system}.
 These methods hold the potential to be computationally more efficient than exponential Runge--Kutta methods.
 In particular, the \texttt{KIOPS} package allows the evaluation of EPIRK methods by fewer quantities of the form $e^{h_i\widetilde{\bm{A}}} \widetilde{\bm{c}}$ than \texttt{phipm} by differentiating between two tasks:
 task $1$ leverages \cref{eq:time_interval_substepping} to allow the approximation of several vectors $\varphi_k(-h_{ij}\bm{A})\bm{c}_k$ for one fixed $k$ at different time points $h_{ij}$;
 task $2$ addresses the computation of linear combinations of multiple $\varphi$-functions $\sum_{k=0}^p h_i^k \varphi_k(-h_i\bm{A})\bm{c}_k$ with $h_i=1$.
 It therefore appears attractive to combine the approach presented in this work with these types of integrators.
 The complication, however, is that due to the dependence of the linearization of the general right-hand side $F(t,\bm{u}(t))$ on the trajectory $\bm{u}(t)$ the matrix $\widetilde{\bm{A}}$ is generally different in each time step, allowing no statements on the spectrum or the definiteness property of $\widetilde{\bm{A}}$ similar to \Cref{rem:Aspsd}.
 We leave this question to future research.
 \end{sloppypar}

\section{Rational Krylov subspace methods}\label{sec:rk}

The current state-of-the-art methods for exponential integrators discussed in \Cref{sec:expint_implementation} are based on representing the matrix $\widetilde{\bm{A}}$ in a polynomial Krylov subspace in order to then apply a cheap rational Pad\'e approximation to its compression.
In this section, we review rational Krylov subspace methods for which the rational approximation is built into the Krylov space \cite{ruhe1984rational,ruhe1994rational,guttel2013rational}.
Results from approximation theory attest the superior quality of the approximation of the exponential function by rational functions in comparison to polynomials, cf.\ the discussion in \Cref{sec:intro}.
We state the results in this section for complex-valued matrices in the notation of the previous section, i.e., $\widetilde{\bm{A}}\in\C^{(n+p) \times (n+p)}$ and vectors $\widetilde{\bm{c}}\in\C^{(n+p)}$.

\begin{definition}[\cite{guttel2013rational}]
The rational Krylov subspace of size $m$ of a matrix $\widetilde{\bm{A}}$ and a vector $\widetilde{\bm{c}}$ is defined as
\begin{equation*}
\mathcal{Q}_m(\widetilde{\bm{A}}, \widetilde{\bm{c}}) = q_{m-1}(\widetilde{\bm{A}})^{-1} \text{span} \{\widetilde{\bm{c}}, \widetilde{\bm{A}}\widetilde{\bm{c}}, \dots , \widetilde{\bm{A}}^{m-1}\widetilde{\bm{c}}\} = q_{m-1}(\widetilde{\bm{A}})^{-1} \mathcal{K}_m(\widetilde{\bm{A}}, \widetilde{\bm{c}}),
\end{equation*}
where $q_{m-1}$ denotes the denominator polynomial, which we assume to be factored, i.e.,
\begin{equation*}
q_{m-1}(z) = \prod_{j=1}^{m-1} (1 - z/\xi_j).
\end{equation*}
The scalars $\xi_j\in\mathbb{C}\cup\{\infty\}, j=1, \dots , m-1$ denote the poles of $\mathcal{Q}_m$, which must not coincide with eigenvalues of $\widetilde{\bm{A}}$ to ensure the invertibility of $q_{m-1}$.
\end{definition}

For $\bm{V}_m$ an orthonormal basis of $\mathcal{Q}_m(\widetilde{\bm{A}}, \widetilde{\bm{c}})$, the rational Arnoldi relation reads
\begin{equation}\label{eq:rational_arnoldi_decomposition}
\widetilde{\bm{A}} \bm{V}_{m+1} \underline{\bm{K}_m} = \bm{V}_{m+1} \underline{\bm{H}_m}
\end{equation}
with
\begin{equation*}
\underline{\bm{H}_m} = \begin{pmatrix}
\bm{H}_m\\
h_{m+1,m}\bm{e}_m^\ast
\end{pmatrix},
\quad
\underline{\bm{K}_m} = \begin{pmatrix}
\bm{I}_m + \bm{H}_m\bm{D}_m\\
h_{m+1,m}\xi_m^{-1}\bm{e}_m^\ast
\end{pmatrix},
\end{equation*}
where $\bm{D}_m = \text{diag}(\xi_1^{-1}, \dots , \xi_m^{-1})$ and $\bm{H}_m,\bm{K}_m\in\C^{m \times m}$.
Furthermore, $h_{m+1,m}$ denotes the usual norm of the residual vector \cite{golub2013matrix} that should not be confused with the time step size $h_i$.
Note that the special case $\xi_1 = \dots = \xi_{m-1} = \infty$ recovers the polynomial Krylov subspace, whereas the case $\xi_1 = \dots = \xi_{m-1} \neq \infty$ is referred to as the shift \& invert Krylov subspace \cite{moret2004rd,van2006preconditioning}.

We continue by slightly rewriting the rational Arnoldi decomposition \cref{eq:rational_arnoldi_decomposition}:
\begin{align*}
&~\widetilde{\bm{A}}\bm{V}_m (\bm{I} + \bm{H}_m\bm{D}_m) + \widetilde{\bm{A}}\bm{v}_{m+1} h_{m+1,m} \xi_m^{-1}\bm{e}_m^\ast = \bm{V}_m \bm{H}_m + \bm{v}_{m+1} h_{m+1,m} \bm{e}_m^\ast\\
\Leftrightarrow &~\widetilde{\bm{A}}\bm{V}_m\bm{K}_m = \bm{V}_m \bm{H}_m + h_{m+1,m} (\bm{I} - \xi_m^{-1}\widetilde{\bm{A}}) \bm{v}_{m+1} \bm{e}_m^\ast.
\end{align*}
For the choice $\xi_{m-1}=\infty$, which is computationally attractive as it only requires one matrix-vector product in the last rational Krylov iteration, we have that $\bm{K}_m\in\C^{m \times m}$ is invertible \cite{beckermann2009error,guttel2013rational} and hence we obtain
\begin{equation}\label{eq:rat_krylov_relation}
\widetilde{\bm{A}}\bm{V}_m = \bm{V}_m \bm{H}_m \bm{K}_m^{-1} + h_{m+1,m} \bm{v}_{m+1} \bm{e}_m^\ast \bm{K}_m^{-1}.
\end{equation}

\begin{definition}[\cite{beckermann2009error}]
The rational Krylov relation with $\xi_{m-1}=\infty$ leads to the following rational matrix function approximation:
\begin{equation}\label{eq:fAb_rational_krylov}
f(\widetilde{\bm{A}})\widetilde{\bm{c}} \approx  \|\widetilde{\bm{c}}\|_2 \bm{V}_m f(\bm{H}_m\bm{K}_m^{-1})\bm{e}_1.
\end{equation}
\end{definition}

\begin{sloppypar}
The orthonormal basis $\bm{V}_m$ of $\mathcal{Q}_m(\widetilde{\bm{A}},\widetilde{\bm{c}})$ can be obtained by a slight modification of the polynomial Arnoldi method \cite{arnoldi1951principle,golub2013matrix}:
Ruhe's rational Arnoldi algorithm \cite{ruhe1984rational,ruhe1994rational,ruhe1994rational2,ruhe1994rational3,ruhe1998rational,berljafa2015generalized} replaces the matrix-vector product $\bm{x}_{j+1} = \widetilde{\bm{A}}\bm{v}_j$ in the $j$th iteration by the computation of a suitable continuation vector $\widetilde{\bm{v}}_j$ and \mbox{$\bm{x}_{j+1} = (\bm{I} - \widetilde{\bm{A}}/\xi_j)^{-1}\widetilde{\bm{A}}\widetilde{\bm{v}}_j$}, i.e., each rational Krylov iteration introduces one factor of the denominator polynomial.
The rest of the method, i.e., (modified) Gram--Schmidt orthogonalization against all previous basis vectors and normalization remains the same.
Computationally, one iteration of a rational Krylov subspace methods is significantly more expensive than one iteration of a polynomial Krylov methods due to the requirement to solve a linear system.
Our goal in the following two subsections is to construct a framework in which the superior approximation quality of rational functions can compensate for this additional cost in certain situations.
\end{sloppypar}

\subsection{Pole selection}\label{sec:rk_poles}

The choice of poles $\xi_j\in\mathbb{C}\cup\{\infty\}, j=1, \dots , m-1$ defines the space of rational functions representable by $\mathcal{Q}_m(\widetilde{\bm{A}},\widetilde{\bm{c}})$ and hence crucially determines the approximation quality of \cref{eq:fAb_rational_krylov}.
Rational (best) approximation results to the exponential function $e^{-x}$ on the real positive semi-axis date back several decades \cite{cody1969chebyshev,carpenter1984extended,gallopoulos1992efficient}.
As in our notation, we approximate $e^{h_i\widetilde{\bm{A}}}$ with negative semi-definite $\widetilde{\bm{A}}$, we consider the equivalent problem of approximating $e^x$ on the real negative semi-axis, which requires a change of signs of the poles $\xi_j$ obtained in the usual notation in the literature.
Hence, our first candidates of poles are the negative of the (complex conjugated) roots of the denominator polynomials of rational best approximations \cite{cody1969chebyshev,carpenter1984extended,gallopoulos1992efficient}.
Note that the real part of these poles are distributed over the positive and negative axis.

\begin{sloppypar}
An alternative method for optimal pole selection for arbitrary parameter-dependent functions was proposed in \cite{berljafa2015generalized,berljafa2017rkfit} and implemented in the \texttt{RKFIT} method \cite{berljafa2014rational}.
The method requires the specification of sample points within the spectrum of $\widetilde{\bm{A}}$ as well as a range of values for $h_i$.
It yields poles for general rational functions of type $(m+k,m)$, i.e., with numerator degree $m+k$ and denominator degree $m$.
Additionally, the poles' real part can be restricted to the negative complex half plane.
We use this option to obtain optimal poles for approximating $e^{-x}$ on the positive real semi-axis and subsequently take the negative of the poles to meet our notational requirement such that all $\xi_j$ have positive real parts.
\end{sloppypar}

Finally, we recap the idea that led to shift \& invert Krylov methods \cite{moret2004rd,van2006preconditioning}.
It has been shown that the restriction of poles to the real numbers leads to an optimal pole selection consisting of one repeated real pole \cite{borwein1983rational}.
Such optimal repeated real poles have been reported in \cite{borner2015three} in the similar setting of approximating the matrix exponential of a semi-definite matrix for a range of time step sizes.
In our notation, this approach leads to positive real poles $\xi_1 = \dots = \xi_{m-1}$.
The authors of \cite{borner2015three} additionally introduce cyclically repeated sets of two, three, and four real poles.
As these choices of poles did not noticeably improve our numerical results, we restrict our discussion in \Cref{sec:experiments} to the case of one single repeated pole.

In our numerical experiments, we choose default values of $72$ repeated real poles and $30$ complex conjugated poles for the rational best approximation and the \texttt{RKFIT} poles.

\subsection{Linear system solves}\label{sec:rk_linear_system_solves}

While matrix-vector products represent the computational bottleneck of polynomial Krylov methods, this is even more true for the linear system solves required by rational Krylov subspace methods.
The only way for rational Krylov subspace methods to outperform polynomial ones is by requiring much smaller iteration numbers such that the cost of the linear system solves is compensated by the avoidance of a large number of polynomial Krylov iterations.
Hence, the efficiency of the solution of the sequence of shifted linear systems $(\bm{I} - \widetilde{\bm{A}}/\xi_j)^{-1}\widetilde{\bm{A}}\widetilde{\bm{v}}_j$ as well as the ratio of required iteration numbers determines whether we can benefit from rational approximations in terms of runtime.
The optimization of the latter has been addressed in \Cref{sec:rk_poles};
we now turn to the efficiency of the linear system solves.

Defining $\bm{b}_j:=\widetilde{\bm{A}}\widetilde{\bm{v}}_j$ we rewrite the rational Arnoldi update as
\begin{equation*}
\bm{x}_{j+1} = (\bm{I} - \widetilde{\bm{A}}/\xi_j)^{-1}\bm{b}_j \Leftrightarrow (\bm{I} - \widetilde{\bm{A}}/\xi_j)\bm{x}_{j+1} = \bm{b}_j \Leftrightarrow (\xi_j \bm{I} - \widetilde{\bm{A}})\bm{x}_{j+1} = \xi_j\bm{b}_j.
\end{equation*}
Inserting the definition of $\widetilde{\bm{A}}$ from \Cref{thm:al-mohy_higham} and introducing subscripts indicating block sizes leads to the block linear system
\begin{equation}\label{eq:block_linear_system}
(\xi_j\bm{I}_{n+p} - \widetilde{\bm{A}})\bm{x}_{j+1} =
\begin{bmatrix}
\xi_j\bm{I}_n + \bm{A} & -\bm{C}\\
\bm{0} & \xi_j\bm{I}_p - \bm{J}_p
\end{bmatrix}
\begin{bmatrix}
[\bm{x}_{j+1}]_n\\
[\bm{x}_{j+1}]_p
\end{bmatrix}
=
\xi_j
\begin{bmatrix}
[\bm{b}_j]_n\\
[\bm{b}_j]_p
\end{bmatrix},
\end{equation}
where $p\ll n$.
The bottom set of equations can be solved for $[\bm{x}_{j+1}]_p$ efficiently as $(\xi_j\bm{I}_p - \bm{J}_p)$ is small and upper triangular.
Backsubstituting $[\bm{x}_{j+1}]_p$ into the top set of equations leads to the following shifted linear systems of equations:
\begin{equation}\label{eq:sequence_shifted_linear_systems}
(\xi_j\bm{I}_n + \bm{A}) [\bm{x}_{j+1}]_n = \xi_j [\bm{b}_j]_n + \bm{C} [\bm{x}_{j+1}]_p.
\end{equation}
Since $\bm{A}$ and the poles $\xi_j$ are constant across all time steps, each rational Krylov procedure requires solutions with the same linear system matrices but generally with different right-hand sides.
The difficulty of this problem is crucially affected by the choice of poles $\xi_j$:
we discussed in \Cref{sec:rk_poles} that the poles obtained from rational best approximations contain positive and negative real parts, which makes some systems \cref{eq:sequence_shifted_linear_systems} indefinite and complex-valued and hence more difficult to solve.
The repeated real pole as well as the real parts of the \texttt{RKFIT} poles are chosen positively to make all systems \cref{eq:sequence_shifted_linear_systems} strictly positive definite.

We now present two strategies for the efficient numerical solution of \cref{eq:sequence_shifted_linear_systems}.

The first strategy is to employ direct methods, which require the upfront computation of one LU or Cholesky decomposition of $(\xi_j\bm{I}_n + \bm{A})$ for each pole.
The obtained triangular matrices then allow for relatively cheap subsequent linear system solves by forward and backward substitution \cite{golub2013matrix}.
The direct approach is favorable for sufficiently small matrices and small numbers of different poles, i.e., few decompositions are required or if many time steps offer the opportunity to compensate the (potentially expensive) upfront computation of the decompositions.
Drawbacks of the direct approach are its generally cubic computational complexity as well as the fill-in issue \cite{golub2013matrix}.
The latter can be partially circumvented by row and column permutations.
In our numerical experiments, we rely on the software package Pardiso 6.0\footnote{\url{https://www.pardiso-project.org/}} \cite{petra2014real,petra2014augmented} for the direct solution of \cref{eq:sequence_shifted_linear_systems} as we observed a superior performance compared to Matlab's \texttt{amd} and \texttt{lu} functionality.

The second strategy is to employ iterative solvers \cite{saad2003iterative}, which do not suffer from the drawbacks discussed for direct solvers.
Unfortunately, prominent methods such as MINRES or GMRES \cite{saad2003iterative} are also based on polynomial Krylov subspaces and hence suffer from the very issue of increasing subspace sizes this work means to avoid.
Also restarted Krylov-based methods tailored to the solution of sequences of shifted linear systems were found to suffer from the described behavior \cite{simoncini2003restarted}.

A powerful technique capable of inducing convergence of iterative solvers independent of the problem size is preconditioning \cite{saad2003iterative}, which has already been employed in rational Krylov methods for certain matrix functions, cf.\ e.g., \cite{bertaccini2021computing}.
Since $\bm{A}$ is symmetric positive semi-definite, algebraic multigrid (AMG) methods \cite{ruge1987algebraic,falgout2006introduction} are well-suited for \cref{eq:sequence_shifted_linear_systems} when $\xi_j$ has positive real part\footnote{Note that for the very structured two-dimensional finite difference discretizations discussed in \Cref{def:finite_differences}, a geometric multigrid solver should also yield satisfactory results.}.
The general idea behind AMG is the construction of a hierarchy of linear systems of increasingly reduced size by means of smoothing and coarse-grid correction.
The solution of the reduced version of the original problem can be obtained cheaply and transformed back to the original problem setting.
We perform our numerical experiments with the aggregation-based multigrid package AGMG 3.3.5\footnote{\url{http://agmg.eu/}} \cite{notay2010aggregation,napov2012algebraic,notay2012aggregation}, which is capable of handling complex-valued nonsymmetric and moderately indefinite linear systems.
We directly use the flexible conjugate gradient (FCG) method \cite{notay2000flexible} implemented in AGMG to solve \cref{eq:sequence_shifted_linear_systems}.
We also experimented with preconditioners based on the approximation of the Schur complement \cite{pearson2012new,bergermann2023preconditioning} but found this to require more runtime due to a relatively high number of Krylov iterations.

\subsection{A-posteriori error estimate}\label{sec:rk_error}

As discussed in \Cref{sec:expint_implementation}, state-of-the-art exponential integration software builds on a-posteriori error estimates of polynomial Krylov approximations of the action of the matrix exponential on vectors.
In order to use rational Krylov methods in the same adaptive manner, we require an a-posteriori error estimate similar to \cite[Theorem 5.1]{saad1992analysis} for the polynomial case.
Although a-priori estimates \cite{guttel2013rational} as well as estimates over time intervals \cite{druskin2009solution} and for the shift \& invert case \cite{van2006preconditioning} exist in the literature, we require the following a-posteriori error at a single time point $h_i\in\R_{>0}$.

\begin{theorem}\label{prop:rat_krylov_error}
\begin{sloppypar}
Let $\xi_{m-1}=\infty$, which leads to the rational Krylov relation \cref{eq:rat_krylov_relation}.
Then, the approximation error of the rational Krylov approximation $\|\widetilde{\bm{c}}\|_2 \bm{V}_m e^{h_i \bm{H}_m\bm{K}_m^{-1}} \bm{e}_1$ to $e^{h_i \widetilde{\bm{A}}}\widetilde{\bm{c}}$ is given by
\end{sloppypar}
\begin{multline}\label{eq:rat_kryl_error}
e^{h_i \widetilde{\bm{A}}}\widetilde{\bm{c}} - \|\widetilde{\bm{c}}\|_2 \bm{V}_m e^{h_i \bm{H}_m\bm{K}_m^{-1}} \bm{e}_1\\ = h_i \|\widetilde{\bm{c}}\|_2 h_{m+1,m} \sum_{k=1}^\infty \bm{e}_m^\ast \bm{K}_m^{-1} \varphi_k(h_i \bm{H}_m\bm{K}_m^{-1}) \bm{e}_1 (h_i \widetilde{\bm{A}})^{k-1} \bm{v}_{m+1}.
\end{multline}
\end{theorem}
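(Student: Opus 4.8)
The plan is to run the classical Duhamel-type argument that Saad used in the polynomial case, but driven by the rational Arnoldi relation \cref{eq:rat_krylov_relation}. Write $\bm{G}_m := \bm{H}_m\bm{K}_m^{-1}$ (well-defined since $\xi_{m-1}=\infty$ makes $\bm{K}_m$ invertible) and introduce the vector-valued error function $\bm{E}(t) := e^{t\widetilde{\bm{A}}}\widetilde{\bm{c}} - \|\widetilde{\bm{c}}\|_2 \bm{V}_m e^{t\bm{G}_m}\bm{e}_1$ for $t\in[0,h_i]$. Because the first column $\bm{v}_1$ of $\bm{V}_m$ is $\widetilde{\bm{c}}/\|\widetilde{\bm{c}}\|_2$, we have $\bm{E}(0)=\bm{0}$, and the goal is to evaluate $\bm{E}(h_i)$ in closed form.

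First I would differentiate: $\bm{E}'(t) = \widetilde{\bm{A}}\,e^{t\widetilde{\bm{A}}}\widetilde{\bm{c}} - \|\widetilde{\bm{c}}\|_2 \bm{V}_m \bm{G}_m e^{t\bm{G}_m}\bm{e}_1$, and then use \cref{eq:rat_krylov_relation} in the form $\bm{V}_m\bm{G}_m = \widetilde{\bm{A}}\bm{V}_m - h_{m+1,m}\bm{v}_{m+1}\bm{e}_m^\ast\bm{K}_m^{-1}$ to replace $\bm{V}_m\bm{G}_m$. The terms involving $\widetilde{\bm{A}}$ regroup as $\widetilde{\bm{A}}\bm{E}(t)$, leaving the linear inhomogeneous ODE $\bm{E}'(t) = \widetilde{\bm{A}}\bm{E}(t) + \|\widetilde{\bm{c}}\|_2 h_{m+1,m}\left(\bm{e}_m^\ast\bm{K}_m^{-1}e^{t\bm{G}_m}\bm{e}_1\right)\bm{v}_{m+1}$, where the parenthesised quantity is a scalar. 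With $\bm{E}(0)=\bm{0}$, the variation-of-constants formula gives $\bm{E}(h_i) = \|\widetilde{\bm{c}}\|_2 h_{m+1,m}\int_0^{h_i}\left(\bm{e}_m^\ast\bm{K}_m^{-1}e^{s\bm{G}_m}\bm{e}_1\right)e^{(h_i-s)\widetilde{\bm{A}}}\bm{v}_{m+1}\,ds$.

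The last step is to expand $e^{(h_i-s)\widetilde{\bm{A}}}=\sum_{j\ge 0}\frac{(h_i-s)^j}{j!}\widetilde{\bm{A}}^j$, interchange sum and integral (legitimate by uniform convergence of the exponential series on the compact interval $[0,h_i]$), and substitute $s=h_i\sigma$. The $j$-th coefficient integral becomes $\frac{h_i^{j+1}}{j!}\,\bm{e}_m^\ast\bm{K}_m^{-1}\left(\int_0^1(1-\sigma)^j e^{h_i\sigma\bm{G}_m}\,d\sigma\right)\bm{e}_1$, and the inner integral equals $j!\,\varphi_{j+1}(h_i\bm{G}_m)$ by the integral representation $\varphi_k(z)=\frac{1}{(k-1)!}\int_0^1(1-\sigma)^{k-1}e^{\sigma z}\,d\sigma$. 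This representation is equivalent to the power series in \Cref{def:phi_functions}, as one checks by expanding $e^{\sigma z}$ and invoking the Beta integral $\int_0^1\sigma^l(1-\sigma)^{k-1}\,d\sigma=\frac{l!\,(k-1)!}{(l+k)!}$. Reindexing $k=j+1$, the factorials cancel and the powers collect as $h_i^k\widetilde{\bm{A}}^{k-1}=h_i(h_i\widetilde{\bm{A}})^{k-1}$, which yields exactly \cref{eq:rat_kryl_error}.

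I do not anticipate a genuine obstacle: the only points needing a sentence of care are the cancellation in the differentiation step (pure bookkeeping once \cref{eq:rat_krylov_relation} is substituted, and requiring only $\bm{K}_m$ invertible), the sum/integral interchange (standard), and the $\varphi_k$ integral representation (a short auxiliary computation, which could also be stated as a one-line lemma or footnote). If one prefers to sidestep ODE theory entirely, an equivalent route is to expand both $e^{h_i\widetilde{\bm{A}}}\widetilde{\bm{c}}$ and $e^{h_i\bm{G}_m}\bm{e}_1$ as power series and repeatedly apply the rational Arnoldi relation to peel off the residual contribution proportional to $\bm{v}_{m+1}$ order by order; the Duhamel argument above is, however, the more transparent of the two.
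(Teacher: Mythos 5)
Your proof is correct, but it takes a genuinely different route from the one in the paper. The paper mirrors Saad's original argument: it uses the recurrence $\widetilde{\bm{A}}\varphi_{k+1}(\widetilde{\bm{A}}) = \varphi_k(\widetilde{\bm{A}}) - \varphi_k(0)\bm{I}$ to relate the rational Krylov error $\bm{s}_m^{(k)}$ of $\varphi_k(\widetilde{\bm{A}})\widetilde{\bm{c}}$ to $\bm{s}_m^{(k+1)}$, telescopes this recursion $j$ times starting from $k=0$, and then lets $j\to\infty$, which requires a separate convergence argument ($\widetilde{\bm{A}}^j\bm{s}_m^{(j)}\to 0$ via $\|\bm{s}_m^{(j)}\|\leq C/j!$); the general $h_i$ and $\|\widetilde{\bm{c}}\|_2$ are then restored by a rescaling at the end. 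Your Duhamel/variation-of-constants derivation reaches the same expansion by solving the exact inhomogeneous ODE for the error $\bm{E}(t)$ and expanding the resulting integral via the representation $\varphi_k(z)=\frac{1}{(k-1)!}\int_0^1(1-\sigma)^{k-1}e^{\sigma z}\,d\sigma$. What your route buys is that the infinite series arises as the absolutely convergent expansion of an exact integral, so the tail-convergence issue the paper has to address disassembles into a routine sum--integral interchange, and $h_i$ and $\|\widetilde{\bm{c}}\|_2$ are carried along from the start rather than reinstated by substitution; what it costs is the auxiliary integral representation of $\varphi_k$ (which you correctly reduce to the Beta integral), whereas the paper only needs the recurrence already stated in \Cref{def:phi_functions}. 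Your closing remark that one could instead peel off the residual order by order from the power series is essentially the paper's recurrence argument in disguise, so you have in fact identified both proofs; all the individual steps you give (the cancellation via \cref{eq:rat_krylov_relation}, $\bm{E}(0)=\bm{0}$ from $\bm{v}_1=\widetilde{\bm{c}}/\|\widetilde{\bm{c}}\|_2$, and the reindexing $k=j+1$) check out.
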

\begin{proof}
For $h_i = \|\widetilde{\bm{c}}\|_2=1$, the proof is essentially analogous to that of \cite[Theorem 5.1]{saad1992analysis} with the rational Krylov relation \cref{eq:rat_krylov_relation} in place of the polynomial Krylov relation.
We define the unit norm vector $\bm{\hat{c}} := \widetilde{\bm{c}}/\|\widetilde{\bm{c}}\|_2$.

By the recurrence relation of $\varphi$-functions, cf.\ \Cref{def:phi_functions}, we have
\begin{equation}\label{eq:recurrence_relation}
\widetilde{\bm{A}}\varphi_{k+1}(\widetilde{\bm{A}}) = \varphi_k(\widetilde{\bm{A}}) - \varphi_k(0)\bm{I},
\end{equation}
and we define the rational Krylov approximation error of $\varphi_{k}(\widetilde{\bm{A}})\bm{\hat{c}}$ as
\begin{equation}\label{eq:error_rat_krylov_approx_phij}
\bm{s}_m^{(k)} = \varphi_{k} (\widetilde{\bm{A}})\bm{\hat{c}} - \bm{V}_m \varphi_{k}(\bm{H}_m\bm{K}_m^{-1})\bm{e}_1.
\end{equation}
Then, we have for all $k\in\N_0$
\begin{align}
\varphi_k (\widetilde{\bm{A}})\bm{\hat{c}} & \overset{\cref{eq:recurrence_relation}}{=} \varphi_k(0)\bm{\hat{c}} + \widetilde{\bm{A}}\varphi_{k+1}(\widetilde{\bm{A}})\bm{\hat{c}}\notag\\
& \overset{\cref{eq:error_rat_krylov_approx_phij}}{=} \varphi_k(0)\bm{\hat{c}} + \widetilde{\bm{A}}\left( \bm{V}_m \varphi_{k+1}(\bm{H}_m\bm{K}_m^{-1})\bm{e}_1 + \bm{s}_m^{(k+1)} \right)\notag\\
& \overset{\cref{eq:rat_krylov_relation}}{=} \varphi_k(0) \bm{V}_m \bm{e}_1 + \bm{V}_m \underbrace{\bm{H}_m \bm{K}_m^{-1} \varphi_{k+1}(\bm{H}_m\bm{K}_m^{-1})}_{\overset{\cref{eq:recurrence_relation}}{=} \varphi_k(\bm{H}_m\bm{K}_m^{-1}) - \varphi_k(0)\bm{I}}\bm{e}_1\notag\\
& \qquad + h_{m+1,m}\bm{v}_{m+1} \bm{e}_m^\ast\bm{K}_m^{-1} \varphi_{k+1}(\bm{H}_m\bm{K}_m^{-1})\bm{e}_1 + \widetilde{\bm{A}} \bm{s}_m^{(k+1)}\notag\\
& = \bm{V}_m \varphi_k(\bm{H}_m\bm{K}_m^{-1})\bm{e}_1 + h_{m+1,m} \bm{e}_m^\ast \bm{K}_m^{-1}\varphi_{k+1}(\bm{H}_m\bm{K}_m^{-1})\bm{e}_1 \bm{v}_{m+1} + \widetilde{\bm{A}}\bm{s}_m^{(k+1)}.\label{eq:phi_k_error}
\end{align}
Inserting this into \cref{eq:error_rat_krylov_approx_phij} gives
\begin{equation}\label{eq:s1_s2_relation}
\bm{s}_m^{(k)} = h_{m+1,m} \bm{e}_m^\ast \bm{K}_m^{-1}\varphi_{k+1}(\bm{H}_m\bm{K}_m^{-1})\bm{e}_1 \bm{v}_{m+1} + \widetilde{\bm{A}}\bm{s}_m^{(k+1)}.
\end{equation}
Considering \cref{eq:phi_k_error} for $k=0$ and recursively inserting \cref{eq:s1_s2_relation} for $k=1, \dots , j-1$ yields
\begin{equation}\label{eq:rat_krylov_error_almost_there}
e^{\widetilde{\bm{A}}}\bm{\hat{c}} = \bm{V}_m e^{\bm{H}_m\bm{K}_m^{-1}}\bm{e}_1 + h_{m+1,m} \sum_{k=1}^j \bm{e}_m^\ast \bm{K}_m^{-1}\varphi_k(\bm{H}_m\bm{K}_m^{-1})\bm{e}_1 \widetilde{\bm{A}}^{k-1}\bm{v}_{m+1} + \widetilde{\bm{A}}^j\bm{s}_m^{(j)}.
\end{equation}
Letting $j\rightarrow\infty$ leads to the desired result for $h_i = \|\widetilde{\bm{c}}\|_2 = 1$.
As argued in the proof of \cite[Theorem 5.1]{saad1992analysis}, the error expansion convergence since we have $\widetilde{\bm{A}}^j\bm{s}_m^{(j)} \rightarrow 0$ as $\bm{s}_m^{(j)}\leq \frac{C}{j!}$ for a constant $C$.
The claim for general \mbox{$h_i\in\R_{>0}$} and \mbox{$\widetilde{\bm{c}}\in\C^n$} follows from \cref{eq:rat_krylov_error_almost_there} when replacing $\widetilde{\bm{A}}, \bm{H}_m,$ and $h_{m+1,m}$ by $h_i\widetilde{\bm{A}}, h_i\bm{H}_m,$ and $h_i h_{m+1,m}$, respectively, which is obtained when multiplying \cref{eq:rat_krylov_relation} by $h_i$, and inserting the definition of $\bm{\hat{c}}$.
\end{proof}

As the summands on the right hand side of \cref{eq:rat_kryl_error} typically decay rapidly \cite{saad1992analysis}, we obtain the following practical and cheaply computable a-posteriori error estimate.

\begin{corollary}\label{cor:rat_krylov_error}
\Cref{prop:rat_krylov_error} leads to the practical error estimate
\begin{equation}\label{eq:rk_error_estimate}
\| e^{h_i \widetilde{\bm{A}}}\widetilde{\bm{c}} - \|\widetilde{\bm{c}}\|_2 \bm{V}_m e^{h_i \bm{H}_m\bm{K}_m^{-1}} \bm{e}_1 \|_2 \approx h_i \|\widetilde{\bm{c}}\|_2 h_{m+1,m} \left| \bm{e}_m^\ast \bm{K}_m^{-1} \varphi_1(h_i \bm{H}_m\bm{K}_m^{-1}) \bm{e}_1 \right|.
\end{equation}
The error estimate can be computed by defining
$$
\bm{M}_{m+1} :=
\begin{bmatrix}
\bm{H}_m\bm{K}_m^{-1} & \bm{e}_1\\
\bm{0}^T & 0
\end{bmatrix}\in\C^{(m+1) \times (m+1)},
$$
which, by \cite[Theorem 1]{sidje1998expokit}, leads to
$$
e^{h_i\bm{M}_{m+1}} = \begin{bmatrix}
e^{h_i\bm{H}_m\bm{K}_m^{-1}} & h_i \varphi_1(h_i\bm{H}_m\bm{K}_m^{-1})\bm{e}_1\\
\bm{0}^T & 1
\end{bmatrix}.
$$
\begin{sloppypar}
We then define $\bm{w} = h_i \varphi_1 (h_i \bm{H}_m\bm{K}_m^{-1}) \bm{e}_1$, solve the (small) linear system \mbox{$\bm{K}_m \bm{u} = \frac{1}{h_i}\bm{w}$}, and obtain the error estimate as $h_i \|\widetilde{\bm{c}}\|_2 h_{m+1,m} | \bm{e}_m^\ast \bm{u} |$.
This only introduces a minimal extra cost as the computation of $e^{h_i \bm{H}_m\bm{K}_m^{-1}}$ is required for the approximation to $e^{h_i \widetilde{\bm{A}}}\widetilde{\bm{c}}$.
\end{sloppypar}
\end{corollary}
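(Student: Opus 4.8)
The plan is to obtain the practical estimate \cref{eq:rk_error_estimate} from the exact series representation \cref{eq:rat_kryl_error} established in \Cref{prop:rat_krylov_error}, and then to justify the stated scheme for evaluating it through the augmented matrix $\bm{M}_{m+1}$. The error in \cref{eq:rat_kryl_error} is an infinite sum over $k\ge 1$ in which the $k$-th summand carries the factor $\varphi_k(h_i\bm{H}_m\bm{K}_m^{-1})$ together with the power $(h_i\widetilde{\bm{A}})^{k-1}$. Because the Taylor coefficients of the $\varphi_k$ decay factorially, and, as exploited in the proof of \Cref{prop:rat_krylov_error}, the remainders obey a bound $\bm{s}_m^{(k)}\le C/k!$, these summands decay rapidly and the sum is dominated by its leading term.

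First I would retain only the $k=1$ summand of \cref{eq:rat_kryl_error}. Since $(h_i\widetilde{\bm{A}})^{0}=\bm{I}$, this term equals $h_i\|\widetilde{\bm{c}}\|_2 h_{m+1,m}\bigl(\bm{e}_m^\ast \bm{K}_m^{-1}\varphi_1(h_i\bm{H}_m\bm{K}_m^{-1})\bm{e}_1\bigr)\bm{v}_{m+1}$, in which the parenthesised quantity is a scalar multiplying the basis vector $\bm{v}_{m+1}$. Taking the Euclidean norm and using $\|\bm{v}_{m+1}\|_2=1$ (it is a column of the orthonormal basis $\bm{V}_{m+1}$) produces exactly the right-hand side of \cref{eq:rk_error_estimate}, the higher-order terms being discarded on account of their rapid decay. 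This is the step that turns the equality \cref{eq:rat_kryl_error} into the approximation ``$\approx$''; it is a heuristic truncation rather than a rigorous bound.

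It remains to verify the evaluation recipe. Here I would invoke the block-exponential identity for the upper block triangular matrix $\bm{M}_{m+1}$, which is the $p=1$ instance of \Cref{thm:al-mohy_higham} (equivalently \cite[Theorem 1]{sidje1998expokit}): identifying the role of $-\bm{A}$ with $\bm{H}_m\bm{K}_m^{-1}$, the single column $\bm{C}$ with $\bm{e}_1$, and $\bm{J}_1$ with the scalar $0$, the last column of $e^{h_i\bm{M}_{m+1}}$ has top block $\bm{w}=h_i\varphi_1(h_i\bm{H}_m\bm{K}_m^{-1})\bm{e}_1$ and bottom entry $1$, in agreement with the block form of $e^{h_i\bm{M}_{m+1}}$ given in the statement. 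Applying $\bm{K}_m^{-1}$ to $\varphi_1(h_i\bm{H}_m\bm{K}_m^{-1})\bm{e}_1=\frac{1}{h_i}\bm{w}$ then amounts to solving the small system $\bm{K}_m\bm{u}=\frac{1}{h_i}\bm{w}$, after which the scalar in \cref{eq:rk_error_estimate} is the last entry $\bm{e}_m^\ast\bm{u}$ and the estimate reads $h_i\|\widetilde{\bm{c}}\|_2 h_{m+1,m}|\bm{e}_m^\ast\bm{u}|$. I expect no genuine analytic obstacle, since the substantive work resides in \Cref{prop:rat_krylov_error}; the only points demanding care are the bookkeeping of the $h_i$ scaling — the factor $h_i$ hidden in $\bm{w}$ must be divided out before inverting $\bm{K}_m$ — and the correct matching of blocks in the exponential identity so that the $\varphi_1$-contribution indeed occupies the top of the last column.
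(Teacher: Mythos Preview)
Your proposal is correct and follows essentially the same approach as the paper: the paper simply remarks before the corollary that the summands in \cref{eq:rat_kryl_error} decay rapidly (citing Saad) and then states the truncation to $k=1$ together with the computational recipe via Sidje's block-exponential identity, without a separate proof. Your write-up is slightly more explicit about the use of $\|\bm{v}_{m+1}\|_2=1$ and the $h_i$-bookkeeping, but otherwise matches the paper's reasoning.
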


In the following, we consider the example from \cite[Example 3.5]{guttel2013rational}.
We illustrate the effectivity of the a-posteriori error estimate from \Cref{cor:rat_krylov_error} while it has been shown that existing a-priori error bounds for rational Krylov approximations \cite[Corollary 3.4]{guttel2013rational} need not be sharp.

\begin{figure}
	\centering
	\captionsetup[subfigure]{oneside,margin={1.4cm,0cm}}
	\subfloat[1D Laplacian]{
		\includegraphics[width=.48\textwidth]{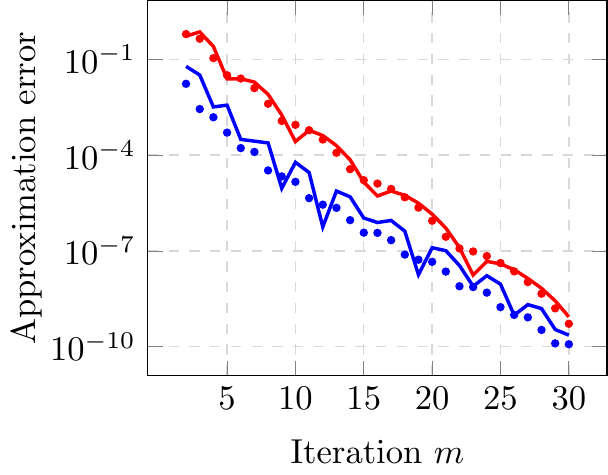}
	}
	\hfill
	\subfloat[2D Laplacian]{
		\includegraphics[width=.48\textwidth]{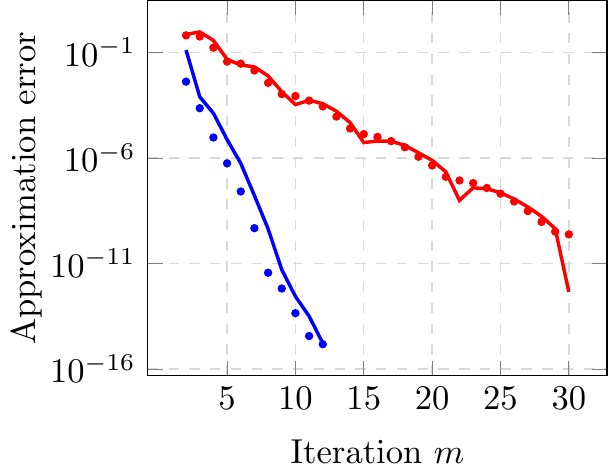}
	}
	
	\captionsetup[subfigure]{oneside,margin={-5.25cm,0cm}}
	\subfloat[Equispaced]{	
		\includegraphics[width=.97\textwidth]{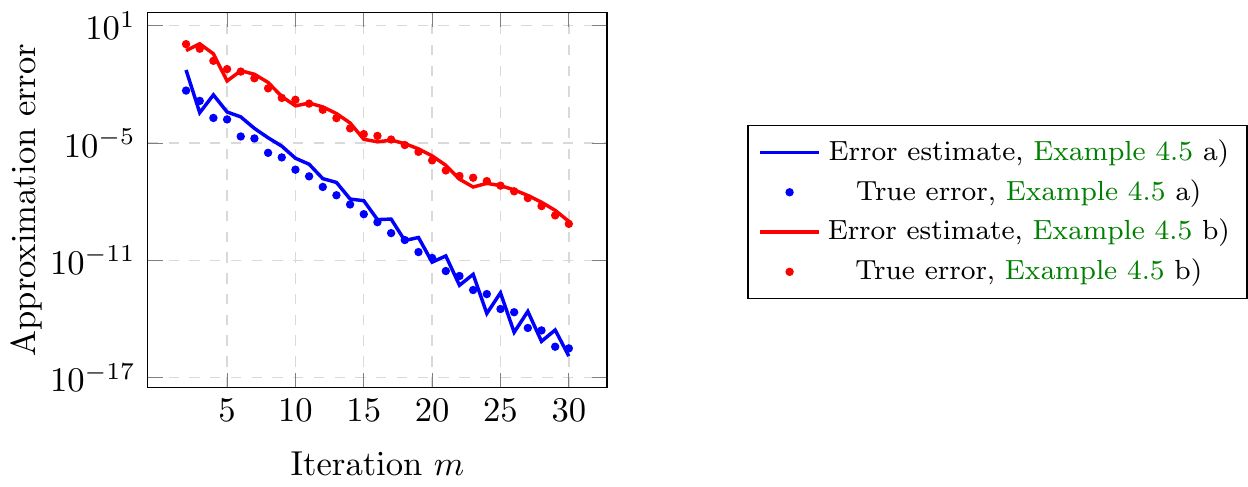}
	}
	\caption{Comparison of the a-posteriori error estimate from \Cref{cor:rat_krylov_error} with the explicitly computed $2$-norm errors of the rational Krylov approximation $e^{h_i \widetilde{\bm{A}}}\widetilde{\bm{c}} - \|\widetilde{\bm{c}}\|_2 \bm{V}_m e^{h_i \bm{H}_m\bm{K}_m^{-1}} \bm{e}_1$ for the two settings described in \Cref{ex:rat_kryl_error_estimate} and varying rational Krylov subspace dimension $m$.}\label{fig:rat_kryl_approx_error}
\end{figure}

\begin{example}\label{ex:rat_kryl_error_estimate}
Consider the three following test matrices $\widetilde{\bm{A}}_l\in\R^{900 \times 900}, l=1,2,3$, from \cite[Example 3.5]{guttel2013rational}, which are scaled and shifted to have equal spectra $\Sigma=[1, 1000]$:
the 1D Laplacian $\widetilde{\bm{A}}_1=\bm{T}_{900}$, the 2D Laplacian $\widetilde{\bm{A}}_2 = (\bm{T}_{30}\otimes\bm{I} + \bm{I}\otimes\bm{T}_{30})$, and the diagonal matrix $\widetilde{\bm{A}}_3=\text{diag}(1, \dots , 900)$ with $900$ evenly spaced eigenvalues.
\Cref{fig:rat_kryl_approx_error} compares the left and right hand sides of \cref{eq:rk_error_estimate} for these matrices in the following situations:
\begin{enumerate}[a)]
	\item $h_i=1$ and $\widetilde{\bm{c}}=\frac{1}{30}\bm{1}\in\R^{900}$, i.e., $\|\widetilde{\bm{c}}\|_2=1$,
	\item $h_i=0.01$ and $\widetilde{\bm{c}}\in\C^{900}$  with $\|\widetilde{\bm{c}}\|_2\approx 24.32$ where real and imaginary parts of the entries are drawn from uniform random distributions in $[0,1]$.
\end{enumerate}
\end{example}

\section{Algorithm}\label{sec:algorithm}

In this section, we summarize the ingredients introduced in the previous sections in \Cref{alg}.

\begin{algorithm}[t]
	\vspace{0.5em}
	\begin{tabular}{lll}
		Input:
		& $\bm{A}\in\R^{n \times n},$ & Discrete linear differential operator.\\
		& $g: [0,T] \times \R^n \rightarrow \R^n,$ & Semi-linear function.\\
		& $\bm{u_0}\in\R^n,$ & Initial conditions.\\
		& $[0, T]\subset\R_{\geq 0},$ & Time interval.\\
	\end{tabular}\\~\\~\\
	\begin{tabular}{ll}
		Parameters: & $h_i\in\R_{> 0}$; \texttt{tol}$\in\R_{> 0}$; \texttt{m\_min}, \texttt{m\_max} $\in\N$; $\xi_j\in\C, j = 1, \dots ,$\texttt{m\_max}\\
	\end{tabular}\\~\\
	\begin{tabular}{ll}
		Subroutines: & \texttt{exp\_rk\_int}, \texttt{exptAb\_routine}, \texttt{linear\_system\_solver}\\
	\end{tabular}\\
	
	\begin{algorithmic}[1]
		\If{\texttt{linear\_system\_solver} $== direct$}
		\State Compute decompositions of $(\xi_j\bm{I}_n + \bm{A})$ for $j=1, \dots ,$\verb|m_max|
		\EndIf
		\State \textbf{function} \texttt{exp\_rk\_int} \textit{\% solve \cref{eq:ode_system}}
		\For{every time step}
		\For{each linear combination of $\varphi$-functions}
		\State Assemble $\widetilde{\bm{A}}$ and $\widetilde{\bm{c}}$
		\State \textbf{function} \texttt{exptAb\_routine} \textit{\% approximate $e^{h_i\widetilde{\bm{A}}} \widetilde{\bm{c}}$}
		\While{\cref{eq:rk_error_estimate} $<$ \texttt{tol}}\label{line:alg:while}
		\State Compute continuation vector $\widetilde{\bm{v}}_j$ ($\widetilde{\bm{v}}_j=\bm{v}_j$ if not \texttt{rk2expint})
		\State Compute $\bm{b}_j = \widetilde{\bm{A}}\widetilde{\bm{v}}_j$
		\If{\texttt{exptAb\_routine} $==$ \texttt{rk2expint} \&\& $j<$ \texttt{m\_max}}
		\If{\texttt{linear\_system\_solver} $== direct$}
		\State Solve \cref{eq:block_linear_system} with back-subst.\ and the decomposition of $(\xi_j\bm{I}_n + \bm{A})$
		\ElsIf{\texttt{linear\_system\_solver} $== iterative$}
		\State Setup AGMG hierarchy for $(\xi_j\bm{I}_n + \bm{A})$
		\State Solve \cref{eq:block_linear_system} with back-subst.\ and iterative AGMG solver
		\EndIf
		\EndIf
		\State Extend Krylov decomposition, i.e, $\bm{V}_m$, $\bm{H}_m$ (and $\bm{K}_m$ if \texttt{rk2expint})
		\State Compute $\|\widetilde{\bm{c}}\|_2 \bm{V}_m e^{h_i \bm{H}_m\bm{K}_m^{-1}} \bm{e}_1$
		\EndWhile
		\State \textbf{end} \texttt{exptAb\_routine}
		\EndFor
		\State Update solution $\bm{u}$ for current time step according to \cref{eq:exprk1,eq:exprk2,eq:exprk3}
		\EndFor
		\State \textbf{end} \texttt{exp\_rk\_int}
	\end{algorithmic}
	\vspace{1em}
	\begin{tabular}{lll}
		Output: & $\bm{u}\in\R^{n \times n_t}$ & 
		\newdimen\origiwspc \origiwspc=\fontdimen2\font \fontdimen2\font=0.72ex Trajectory of the solution of \cref{eq:ode_system} along the $n_t$ time steps.\fontdimen2\font=\origiwspc
	\end{tabular}
	\caption{Rational Krylov Runge--Kutta exponential integrator method for the solution of \cref{eq:ode_system}.}\label{alg}
\end{algorithm}

\begin{sloppypar}
Our $(RK)^2$EXPINT (Rational Krylov Runge--Kutta exponential integrators, \texttt{rk2expint}) routine represents the core of the implementation of the method proposed in this paper that can be used as \texttt{exptAb\_routine} in place of \texttt{phipm} \cite{niesen2012algorithm} or \texttt{KIOPS} \cite{gaudreault2018kiops} presented in \Cref{sec:expint_implementation}.
\texttt{rk2expint} is based on \texttt{KIOPS} but replaces the polynomial Krylov method by the rational Krylov method introduced in \Cref{sec:rk} and implemented in the \texttt{RKToolbox} \cite{berljafa2014rational}.
We adopt the adaptivity from \texttt{KIOPS} with respect to the choice of the Krylov subspace size.
Note that due to this, the condition of the while-loop in line $9$ of \Cref{alg} as well as the quantity $\|\widetilde{\bm{c}}\|_2 \bm{V}_m e^{h_i \bm{H}_m\bm{K}_m^{-1}} \bm{e}_1$ in line $20$ are not evaluated in every iteration.
\texttt{KIOPS}' time interval sub-stepping functionality discussed in \Cref{sec:expint_implementation}, however, is excluded from \texttt{rk2expint} as the rational Krylov convergence should be independent of the spectrum of the discrete linear differential operator $\bm{A}$, cf.\ \Cref{sec:intro}.
Hence, the choice of a fixed number of optimized poles appropriate to the problem at hand should suffice.
In case of exhaustion of the a-priori specified poles, we continue extending the rational Krylov subspace by polynomial Krylov steps, i.e., poles $\xi_j=\infty$ within the while-loop.
This corresponds to restricting of the denominator degree in the rational approximation to the specified number of poles \texttt{m\_max} while further increasing the numerator degree.
\end{sloppypar}

We also adopt the functionality of \texttt{KIOPS} to perform the two tasks discussed at the end of \Cref{sec:expint_implementation}.
Note that the structure of higher-order exponential Runge--Kutta integrators increases the required number of calls of task 2 and additionally necessitates linear system solves with matrices such as $(\xi_j\bm{I}_n + h_i\bm{A})$ or $(\xi_j\bm{I}_n + \frac{h_i}{2}\bm{A})$.
Consequently, the benefit of a higher convergence order comes with the need of computing additional sets of matrix decompositions or AGMG hierarchies.
For readability and since only $\bm{A}$ would need to be changed into $h_i\bm{A}$ or $\frac{h_i}{2}\bm{A}$ at every appearance of $(\xi_j\bm{I}_n + \bm{A})$, we refrain from explicitly including this case in \Cref{alg}.

Finally, \texttt{rk2expint} relies on the a-posteriori error estimate derived in \Cref{sec:rk_error} as a stopping criterion to obtain approximations to $e^{h_i\widetilde{\bm{A}}} \widetilde{\bm{c}}$ to a user-specified tolerance \texttt{tol}, which we set to a default value of $10^{-8}$ in our numerical experiments.
The choice of poles $\xi_j$ and details on the solution of the linear systems \cref{eq:block_linear_system} are discussed in  \Cref{sec:rk_poles,sec:rk_linear_system_solves}, respectively.
We implement the example exponential Runge--Kutta integrators (\texttt{exp\_rk\_int} routines) \texttt{SW2} (Strehmel and Weiner \cite{weiner2013linear}), \texttt{ETD3RK} (Cox \& Mathews \cite{cox2002exponential}), and \texttt{Krogstad4} \cite{krogstad2005generalized} of stiff order 2, 3, and 4, respectively.
The default choice of the remaining parameters are \texttt{m\_min} $=5$ and \texttt{m\_max} $=72$ for one repeated real pole and \texttt{m\_max} $=30$ for complex poles for \texttt{rk2expint} as well as \texttt{m\_min} $=10$ and \texttt{m\_max} $=128$ if \texttt{phipm} or \texttt{KIOPS} is chosen as \texttt{exptAb\_routine}.
The default tolerance for the preconditioned linear system solves is $10^{-7}$.

\section{Numerical experiments}\label{sec:experiments}

\begin{sloppypar}
We test \Cref{alg} on finite difference discretizations of the Allen--Cahn and Gierer--Meinhardt equations defined on two-dimensional continuous domains as well as on inherently discrete graph/network domains.
All Matlab codes required to reproduce the results presented in this section are publicly available under \url{https://github.com/KBergermann/rk2expint}.
In our experiments, we used an AMD Ryzen 5 5600X 6-Core processor with $16$GB memory as well as Matlab R2020b with the external packages \texttt{phipm}\footnote{\url{http://www1.maths.leeds.ac.uk/~jitse/software.html}}, \texttt{KIOPS}\footnote{\url{https://gitlab.com/stephane.gaudreault/kiops}}, \texttt{RKToolbox}\footnote{\url{http://guettel.com/rktoolbox/}}, AGMG 3.3.5\footnote{\url{http://agmg.eu/}}, and Pardiso 6.0\footnote{\url{https://www.pardiso-project.org/}}.
\end{sloppypar}

The runtimes of the three methods \texttt{phipm}, \texttt{KIOPS}, and \texttt{rk2expint} are directly comparable since they are all implemented in Matlab and the \texttt{rk2expint} routine is based on \texttt{KIOPS}, which, in turn, is based on \texttt{phipm}.
Furthermore, the computational bottleneck of \texttt{rk2expint} is the solution of the sequences of shifted linear systems, which is performed by external software and makes up between $60\%$ and $90\%$ of the total runtime.

We mention that the techniques presented in \Cref{sec:rk_linear_system_solves} are also applicable to the linear system solves with the Jacobian within Newton iterations that one encounters when employing implicit (non-exponential) Runge--Kutta methods.
A class of suitable methods for \cref{eq:ode_system} are stiffly accurate diagonally implicit Runge--Kutta (SDIRK) methods \cite{alexander1977diagonally} for which two integrators \texttt{SDIRK}(2,2) of order $2$ with $2$ stages as well as one integrator \texttt{SDIRK}(3,3) of order $3$ with $3$ stages exist \cite[Theorem 5]{alexander1977diagonally}.
As for our method, the runtime of SDIRK methods is dominated by the solution of linear systems similar to \cref{eq:sequence_shifted_linear_systems} and using the preconditioned iterative strategy from \Cref{sec:rk_linear_system_solves}, their runtime depends on the required number of Newton iterations per time step.
Numerical experiments not detailed in this paper show that the latter tends to increase in comparison to the required number of rational Krylov iterations as the problem becomes ``more challenging'', i.e., when smaller time steps and larger denominator polynomial degrees are required to obtain stable solutions.
Roughly speaking, using similar tolerances, \texttt{SDIRK}(2,2) was about a factor of $5$ faster than \texttt{SW2} with \texttt{rk2expint} in the setting of \Cref{fig:iter_runtime_2D_AC}, \texttt{SDIRK}(3,3) was about a factor of $2$ faster than \texttt{ETD3RK} with \texttt{rk2expint} in the setting of \Cref{fig:iter_runtime_2D_GM}, and \texttt{SDIRK}(3,3) was somewhat slower than \texttt{Krogstad4} with \texttt{rk2expint} in the setting of \Cref{fig:iter_runtime_graph_AC}.
We chose \texttt{SDIRK}(3,3) in the latter example due to the lack of existence of an SDIRK method of order $4$ with $4$ stages \cite[Theorem 6]{alexander1977diagonally}.
Since such a method would be expected to be as accurate as \texttt{Krogstad4} at about $\frac{4}{3}$ of the runtime of \texttt{SDIRK}(3,3), we conclude that exponential integration is the superior strategy for this problem.

\subsection{Allen--Cahn equation on 2D continuous domain}\label{sec:experiments_2D_AC}

We start by considering the Allen--Cahn equation, which can be used to model phase separation phenomena without mass conservation\footnote{i.e., the integral over $u$ on the domain $\Omega$ may change over time.} \cite{allen1979microscopic}.
We adopt the example setting from \cite{gaudreault2018kiops} and define it as
\begin{equation}\label{eq:allen_cahn}
 \frac{\partial u}{\partial t} = \epsilon^2 \Delta u + u - u^3,
\end{equation}
with the interface parameter $\epsilon\in\R$, homogeneous Neumann boundary conditions, $\epsilon^2=0.1$, $\Omega=[-1,1]^2$, $T=1$, and initial conditions \mbox{$u_0 = 0.1 + 0.1 \cos(2\pi x) \cos(2\pi y)$}, where $x$ and $y$ denote the two spatial coordinates.

\Cref{fig:iter_runtime_2D_AC} in \Cref{sec:intro} compares our method \texttt{rk2expint} with \texttt{phipm} and \texttt{KIOPS} in approximating the quantities $e^{h_i\widetilde{\bm{A}}}\widetilde{\bm{c}}$ for a relatively large time step size $h_i=\frac{1}{2}$ in terms of average Krylov iteration numbers per time step and total runtimes for the solution of \cref{eq:allen_cahn}.
It confirms that rational Krylov iteration numbers are almost independent of $\|h_i\widetilde{\bm{A}}\|_2$ (i.e., the problem size, cf.\ \Cref{prop:fd_spectra}) leading to a near-linear scaling of the runtime while the polynomial Krylov iteration numbers of \texttt{phipm} and \texttt{KIOPS} increase with growing $n$.
Note that the structure of exponential Runge--Kutta methods does not permit \texttt{KIOPS} to outperform \texttt{phipm} as reported, e.g., in \cite{gaudreault2018kiops}.
The reason is the cost effectivity of EPIRK \cite{tokman2006efficient,tokman2011new} methods in terms of the numbers of quantities $e^{h_i \widetilde{\bm{A}}}\widetilde{\bm{c}}$ required per time step to obtain a given convergence order.
Combining our approach with EPIRK methods would be an interesting road for future research.

\begin{figure}
	\includegraphics[width=.65\textwidth]{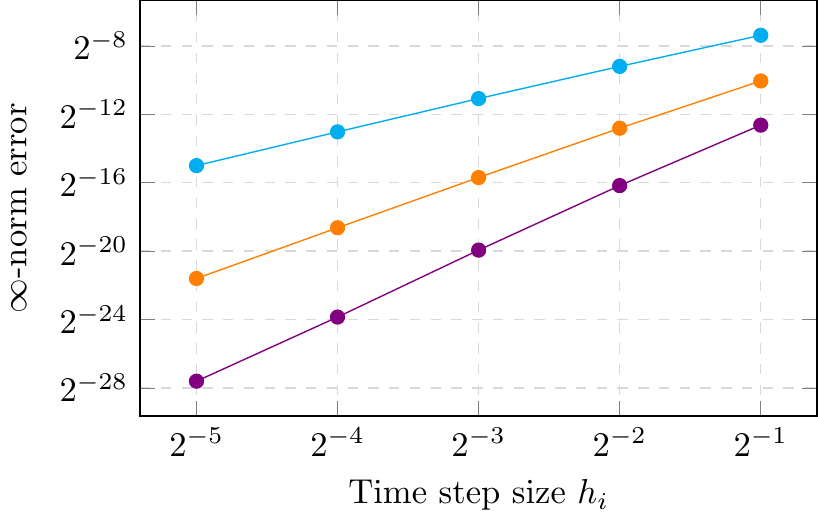}
	\hfill
	\raisebox{1\height}{
		\includegraphics[width=.2\textwidth]{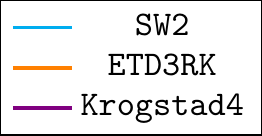}
	}
	\caption{Experimental convergence order of the exponential Runge--Kutta integrators \texttt{SW2}, \texttt{ETD3RK}, and \texttt{Krogstad4}, which have stiff order $2$, $3$, and $4$, respectively.
	We plot the error to Matlab's \texttt{ode15s} solution to a tolerance of $10^{-12}$ in $\infty$-norm.
	The problem setting is the 2D Allen--Cahn equation from \Cref{fig:iter_runtime_2D_AC} with $n_x=200$ and varying time step size $h_i$.
	We only plot the errors obtained by \texttt{rk2expint} as they are very similar for all three \texttt{exptAb\_routine}s.}\label{fig:AC_eoc}
\end{figure}

\begin{figure}
	\centering
	\subfloat[Krylov iteration numbers]{
		\includegraphics[width=.45\textwidth]{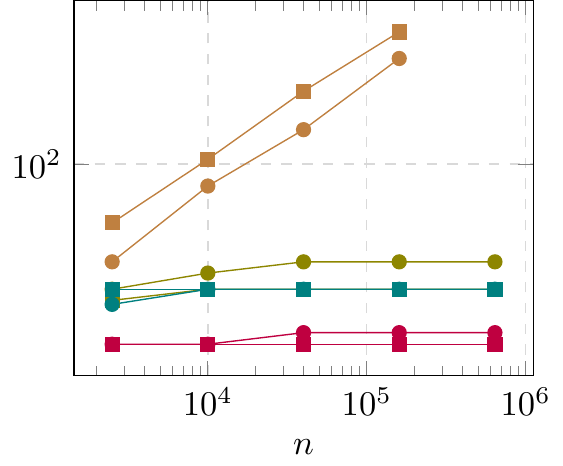}
	}
	\hfill
	\subfloat[Runtime in seconds]{
		\includegraphics[width=.45\textwidth]{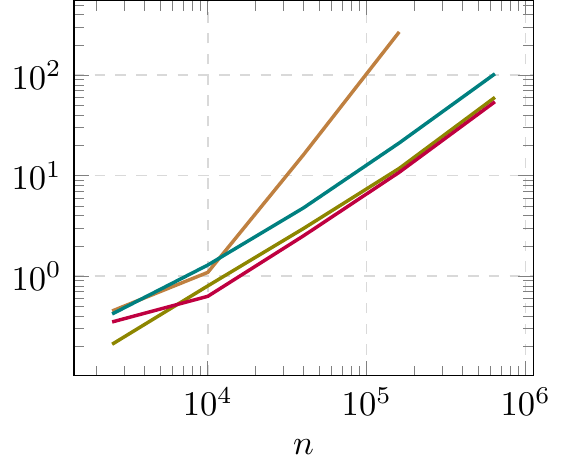}
	}
	\vspace{10pt}
	\includegraphics[width=.9\textwidth]{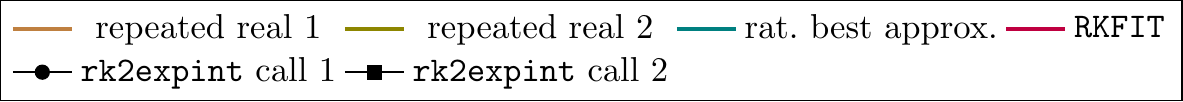}
	\vspace{-10pt}
	\caption{Rational Krylov iteration and runtime comparison for different choices of poles $\xi_j$ for the 2D Allen--Cahn equation solved with the \texttt{SW2} integrator.
		The problem settings corresponds to that of \Cref{fig:iter_runtime_2D_AC}.}\label{fig:2D_AC_comparison_poles}
\end{figure}

In \Cref{fig:AC_eoc}, we experimentally confirm the theoretically indicated convergence orders of the three considered exponential Runge--Kutta integrators \texttt{SW2}, \texttt{ETD3RK}, and \texttt{Krogstad4} in the example setting of \Cref{fig:iter_runtime_2D_AC} and for $n_x=200$.

In addition, \Cref{fig:2D_AC_comparison_poles} compares average Krylov iteration numbers per time step and total runtimes for the different choices of poles presented in \Cref{sec:rk_poles}.
We use two choices of one repeated real pole (corresponding to the special case of a shift \& invert Krylov subspace method \cite{moret2004rd,van2006preconditioning}), for which we have increased the maximum number of poles to $\texttt{m\_max}=500$ in this example in order to prevent polynomial Krylov steps after exhaustion of the provided poles.
Choice $1$ corresponds to $\xi_1 = \dots = \xi_{m-1}=-3.14 \cdot 10^{5}$ \cite{borner2015three} and choice $2$ to $\xi_1 = \dots = \xi_{m-1}=-\frac{h_i}{10}=-\frac{1}{20}$ \cite{van2006preconditioning}.
\Cref{fig:2D_AC_comparison_poles} shows that the choice of the repeated real pole has a significant influence on the convergence behavior and optimal pole selection strategies are a topic of ongoing research \cite{druskin2011adaptive,guttel2013rational,borner2015three,berljafa2017rkfit,massei2021rational}.
The two sets of complex-valued poles (rat.\ best approx.\ and \texttt{RKFIT}) both show low and virtually identical iteration numbers across all considered problem sizes with the \texttt{RKFIT} numbers ranging below those of the rational best approximations'.
We repeat the same experiment for the Gierer--Meinhardt equations in \Cref{fig:2D_GM_comparison_poles} and use \texttt{RKFIT} poles in the remainder of the numerical experiments.
We remark again that \texttt{RKFIT} allows automated pole optimization tailored for a wide range of problems, cf.~\Cref{sec:rk_poles}.

\subsection{Gierer--Meinhardt equations on 2D continuous domain}\label{sec:experiments_2D_GM}

\begin{figure}
	\includegraphics[width=0.135\textwidth]{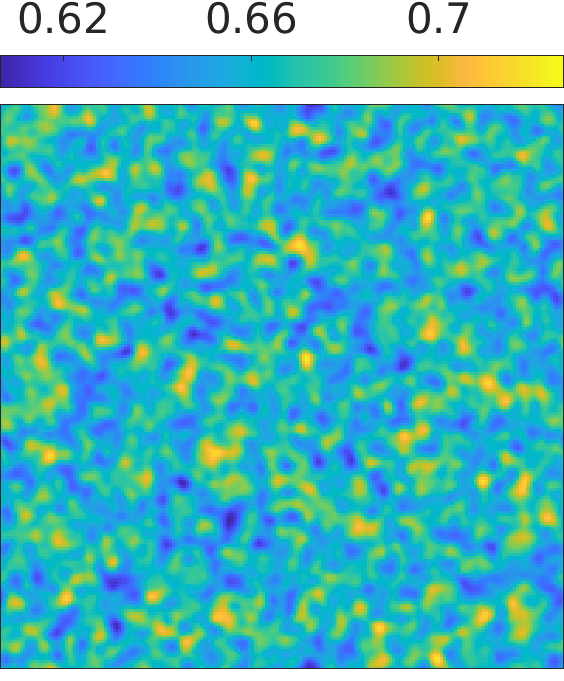}
	\includegraphics[width=0.135\textwidth]{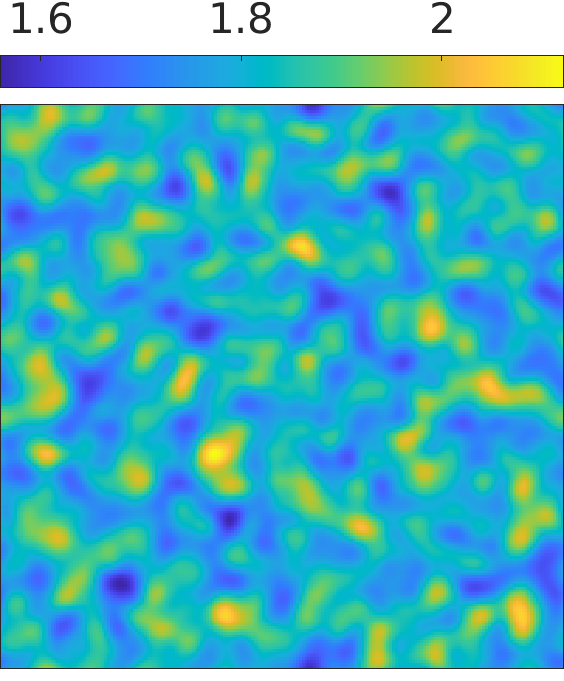}
	\includegraphics[width=0.135\textwidth]{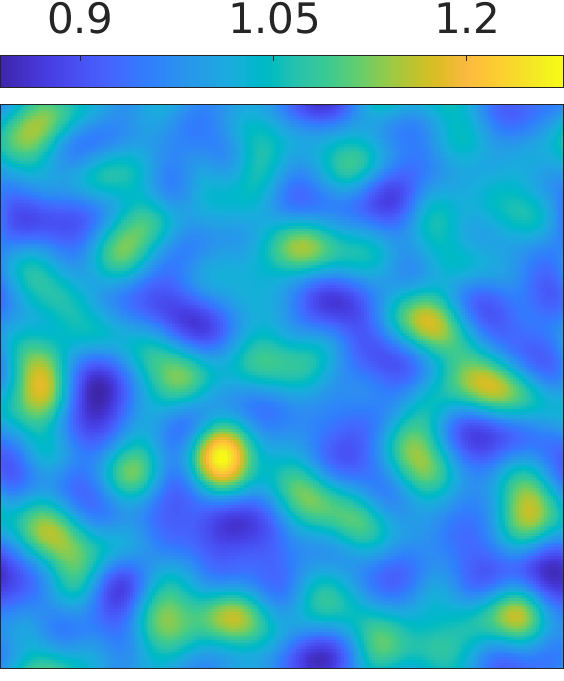}
	\includegraphics[width=0.135\textwidth]{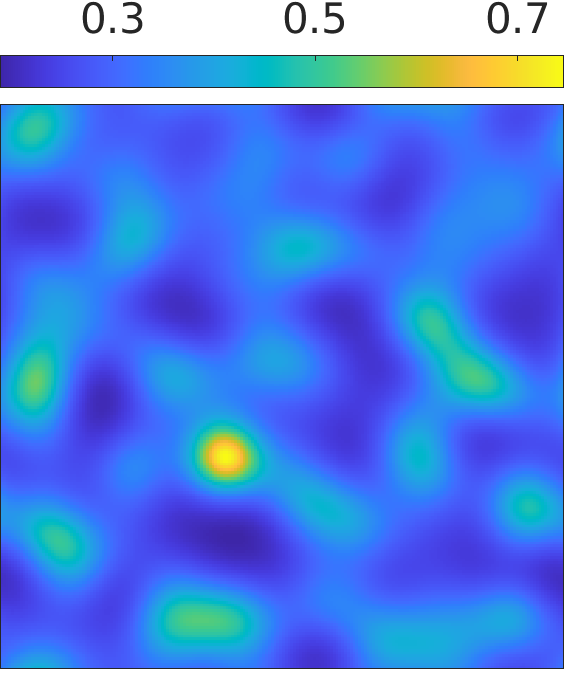}
	\includegraphics[width=0.135\textwidth]{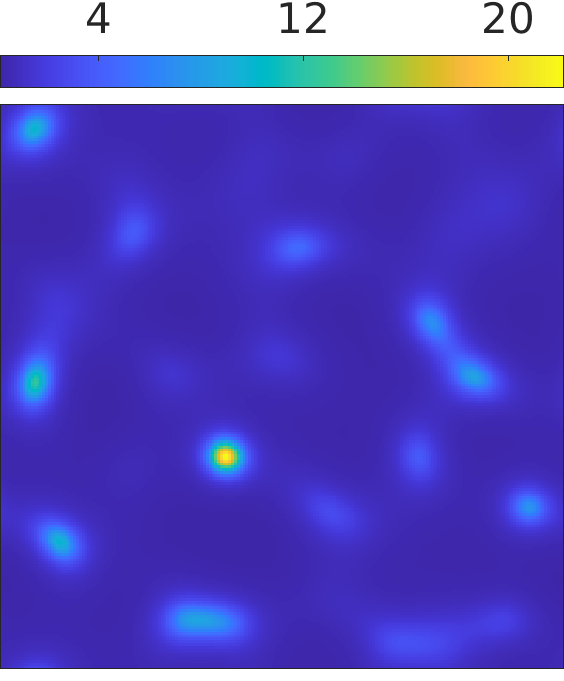}
	\includegraphics[width=0.135\textwidth]{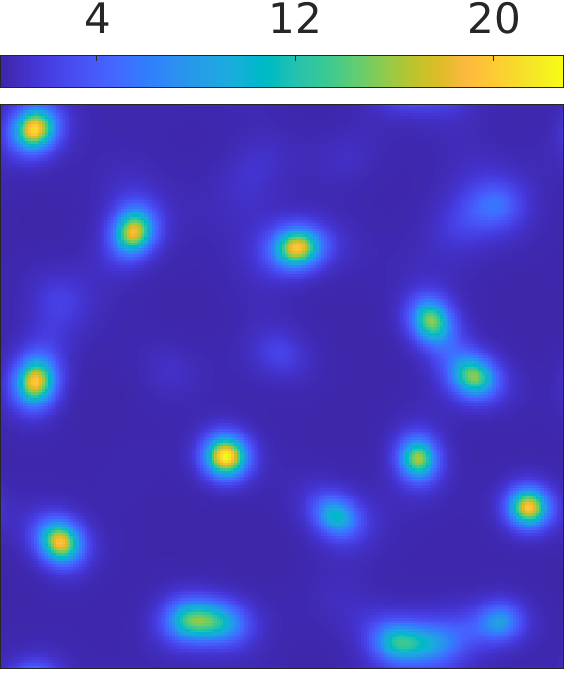}
	\includegraphics[width=0.135\textwidth]{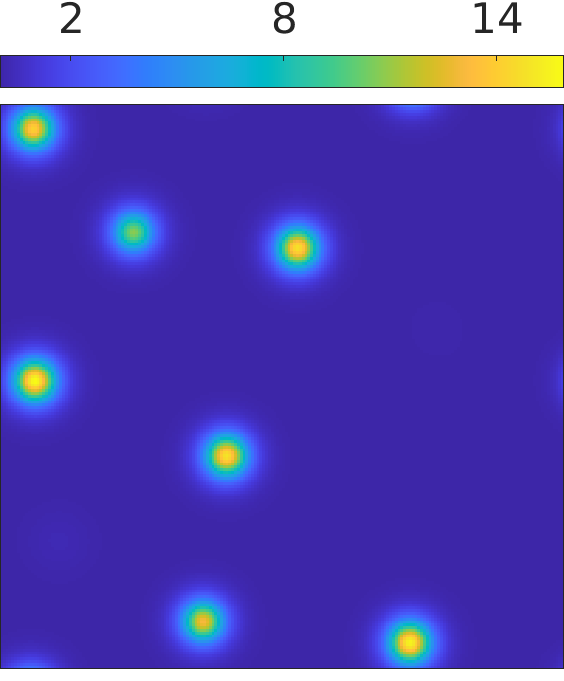}
	
	\vspace{2pt}
	
	\includegraphics[width=0.135\textwidth]{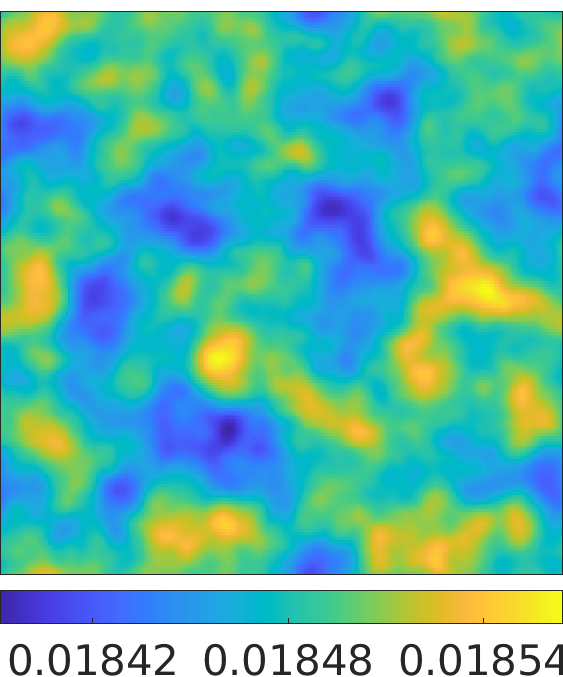}
	\includegraphics[width=0.135\textwidth]{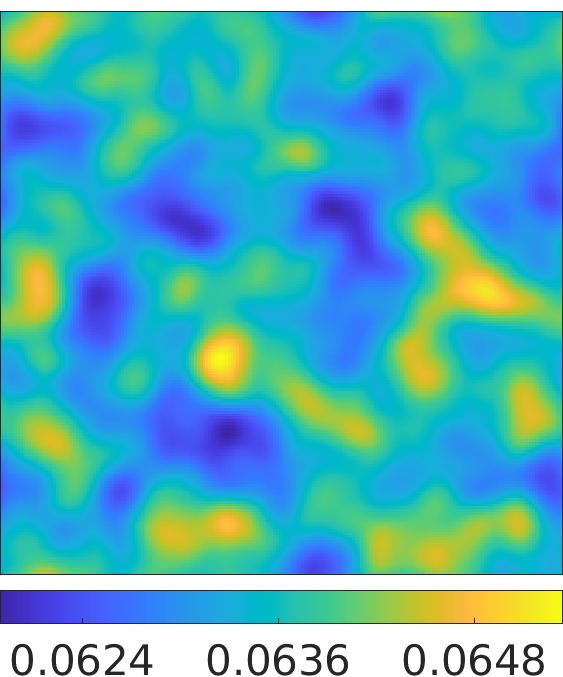}
	\includegraphics[width=0.135\textwidth]{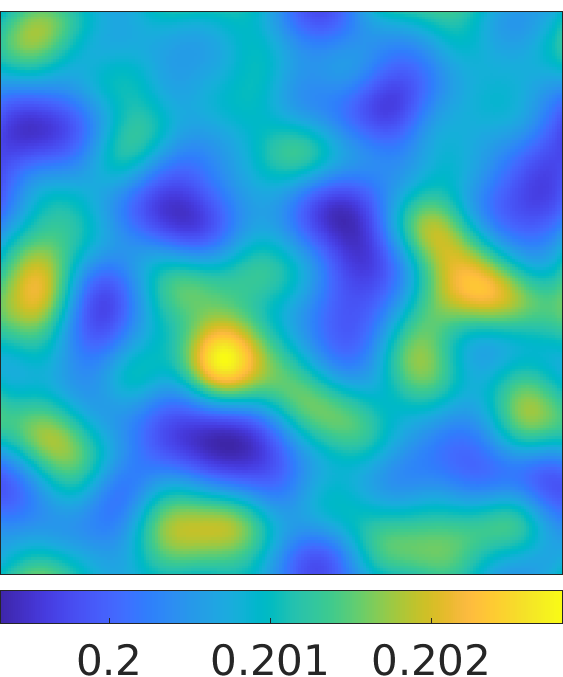}
	\includegraphics[width=0.135\textwidth]{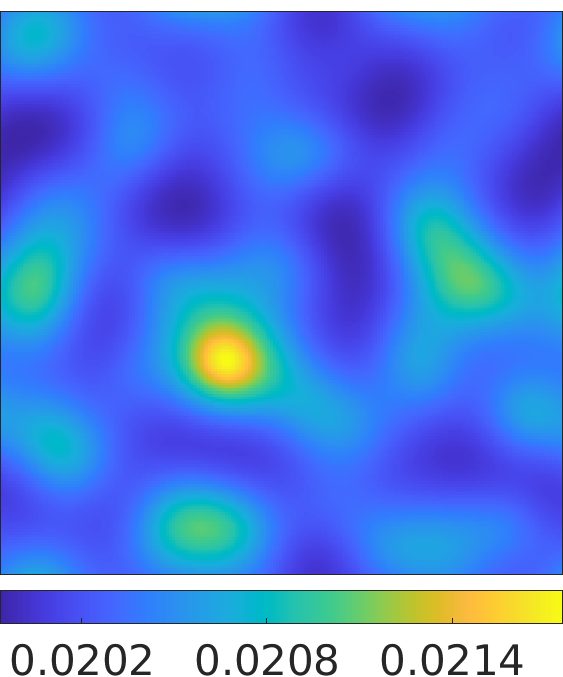}
	\includegraphics[width=0.135\textwidth]{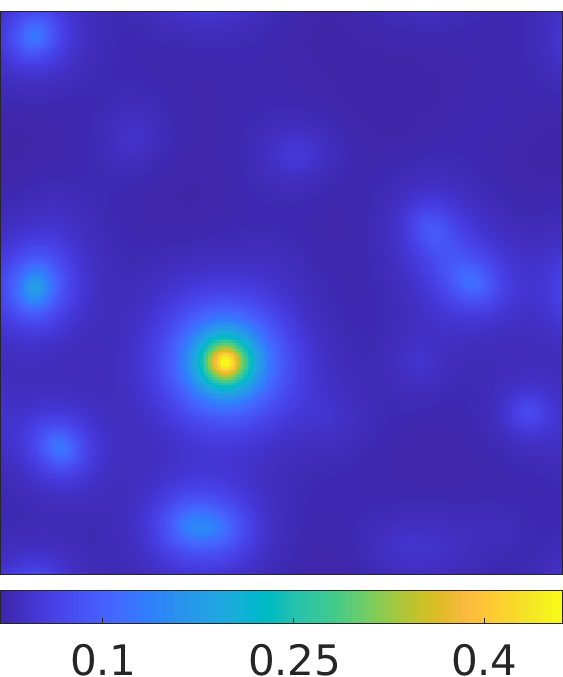}
	\includegraphics[width=0.135\textwidth]{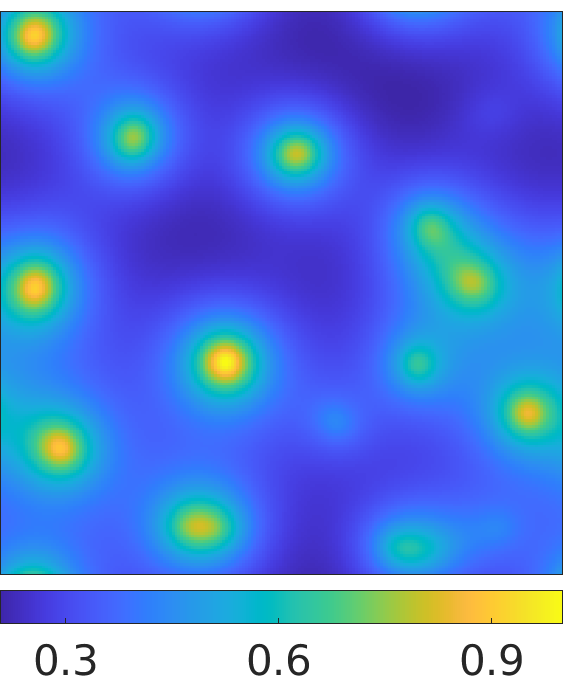}
	\includegraphics[width=0.135\textwidth]{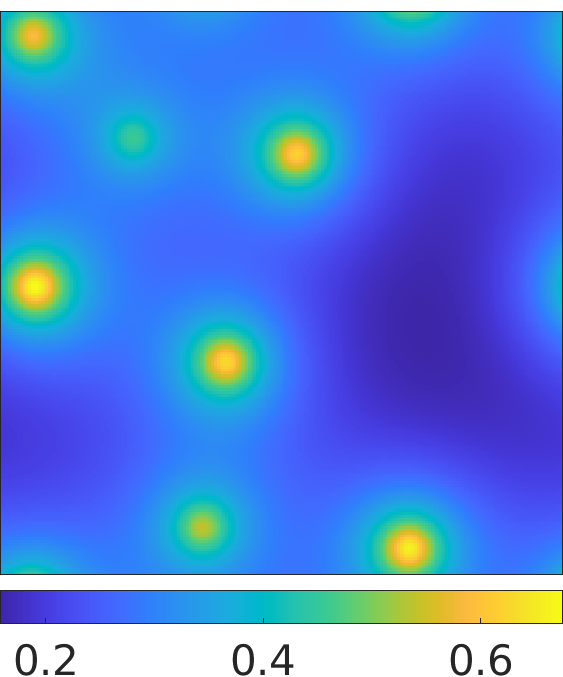}
	\caption{Gierer--Meinhardt simulation result on the 2D unit square with $n_x=200$.
	The top row shows activator and the bottom row inhibitor concentrations.
	Initial activator concentrations are random while initial inhibitor concentrations are constant.
	The parameters are chosen $D_a=0.005, D_h=0.5, p=\mu=p'=\nu = 16$.
	The time interval is $[0,1]$ with a time step size of $h_i=0.01$.
	From left to right, the corresponding times are $t=0.02, t=0.05, t=0.2, t=0.4, t=0.45, t=0.5, t=1$.}\label{fig:GM_example_solution}
\end{figure}

Next, we consider the Gierer--Meinhardt equations, which are frequently used to model biological pattern formation processes \cite{gierer1972theory}.
They describe the spatio-temporal evolution of an activator $a$ and an inhibitor $h$ and are given by
\begin{align}
\frac{\partial a}{\partial t} & = D_a \Delta a + p \frac{a^2}{h} - \mu a,\label{eq:pde_gm_a}\\
\frac{\partial h}{\partial t} & = D_h \Delta h + p' a^2 - \nu h,\label{eq:pde_gm_h}
\end{align}
where $D_a, D_h \in\R_{>0}$ denote the diffusion constants of activator and inhibitor, respectively, and $p, \mu, p', \nu\in\R_{>0}$ denote model parameters.
The two equations lead to block-diagonal discrete linear differential operators $\bm{A}\in\R^{2n_x^2 \times 2n_x^2}$ and block solution vectors $\bm{u}\in\R^{2n_x^2}$.
Throughout our experiments, we use periodic boundary conditions as well as random initial conditions in the interval $[0.4, 0.6]$ for $a$ and constant initial conditions of $0.2$ for $h$.
\Cref{fig:GM_example_solution} shows an exemplary trajectory of a solution of the Gierer--Meinhardt equations, where the activator concentration $a$ is shown in the top and the inhibitor concentration $h$ in the bottom row.

\begin{figure}
	\centering
	\subfloat[Krylov iteration numbers]{
		\includegraphics[width=.42\textwidth]{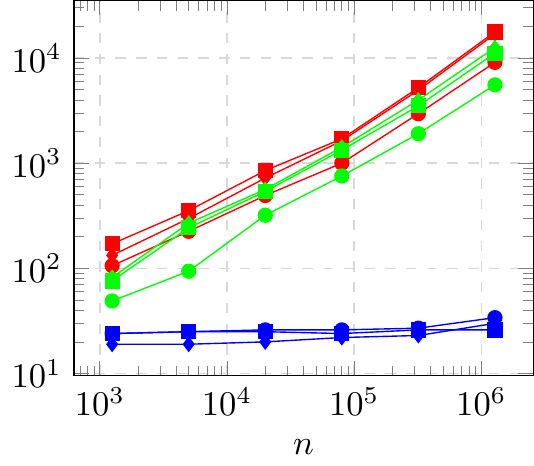}
	}
	\hfill
	\subfloat[Runtime in seconds]{
		\includegraphics[width=.42\textwidth]{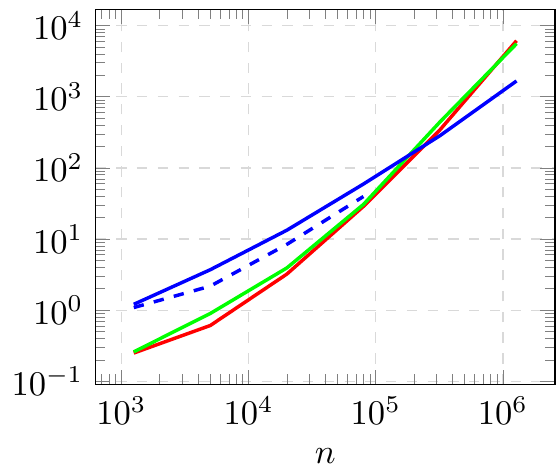}
	}
	\vspace{10pt}
	\includegraphics[width=.99\textwidth]{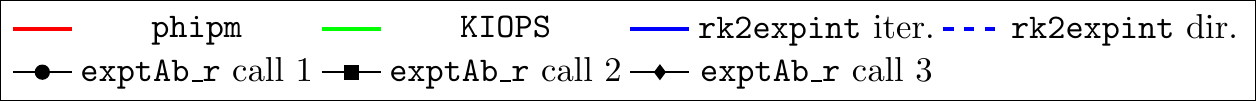}
	\vspace{0pt}
	\caption{Average Krylov iteration numbers per evaluation of $e^{h_i \widetilde{\bm{A}}}\widetilde{\bm{c}}$ and total runtimes of solving the 2D Gierer--Meinhardt equations with periodic boundary conditions.
	The total number of grid points is denoted by $n$.
	We use the \texttt{ETD3RK} exponential Runge--Kutta integrator, which requires three evaluations of $e^{h_i \widetilde{\bm{A}}}\widetilde{\bm{c}}$ per time step.
	For \texttt{rk2expint}, we use complex-valued $(35,30)$ \texttt{RKFIT} poles fitted on the interval $[0,10^6]$ and report runtimes of direct and preconditioned iterative linear system solves.}\label{fig:iter_runtime_2D_GM}
\end{figure}

\begin{figure}
	\centering
	\includegraphics[width=.48\textwidth]{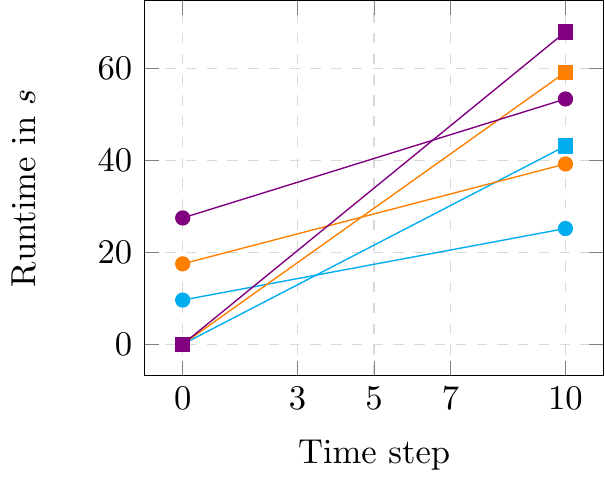}
	\hfill
	\raisebox{1\height}{
		\includegraphics[width=.28\textwidth]{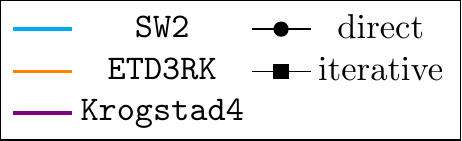}
	}
	\caption{Runtime comparison of direct and preconditioned iterative linear system solves within rational Krylov methods for different exponential Runge--Kutta integrators.
		The problem setting corresponds to that of \Cref{fig:iter_runtime_2D_GM} with $n_x=200$, i.e., $n=80\,000$.}\label{fig:GM_direct_vs_iterative}
\end{figure}

\Cref{fig:iter_runtime_2D_GM} compares Krylov iteration numbers and runtimes of \texttt{phipm}, \texttt{KIOPS}, and \texttt{rk2expint} for the Gierer--Meinhardt equations.
In these and the following experiments, we use the parameters \mbox{$D_a=0.01, D_h=p=p'=\mu=\nu=T=1$}, and $h_i=0.1$ on the unit square $\Omega=[0,1]^2$.
\Cref{fig:iter_runtime_2D_GM} confirms the observations made for the Allen--Cahn equation in \Cref{fig:iter_runtime_2D_AC}, namely approximately constant rational Krylov iteration numbers and a near-linear runtime dependence of \texttt{rk2expint} on the problem size.

Furthermore, we compare the performance of direct and preconditioned iterative linear system solves discussed in \Cref{sec:rk_linear_system_solves}.
\Cref{fig:GM_direct_vs_iterative} compares runtimes of the direct and preconditioned iterative solvers in the previously considered Gierer--Meinhardt problem setting for $n_x=200$ and for our three different exponential Runge--Kutta integrators.
It illustrates that the upfront cost of computing the decompositions for the direct solver quickly pays off in comparison to the runtime required by the preconditioned iterative solver.
Depending on how many decompositions per pole are required by the exponential integrator (cf.\ the discussion in \Cref{sec:algorithm}), the upfront cost is redeemed within $3$ to $7$ time steps making direct solvers particularly well-suited if many time steps are required.

The major limitation of the direct approach is its memory requirement: for \mbox{$n_x\geq 400$}, the required decompositions can no longer be stored in our $16$GB memory.
Possible remedies are using low-order exponential Runge--Kutta integrators or a smaller number of distinct (and possibly repeated) poles, both of which in turn lead to the requirement of performing either more time steps or more rational Krylov iterations to obtain the same accuracy of the solution to \cref{eq:ode_system}.
The slightly lower runtimes of the direct solver for admissible problem sizes up to $n_x=200$ are also reported in \Cref{fig:iter_runtime_2D_GM}.

Finally, \Cref{fig:2D_GM_comparison_poles} repeats the comparison of different choices of poles described at the end of \Cref{sec:experiments_2D_AC} for the Gierer--Meinhardt equations.
While the results are qualitatively similar to those reported in \Cref{fig:2D_AC_comparison_poles} on a generally higher level of iteration numbers, the runtime advantage of \texttt{RKFIT} in comparison to the other choices of poles are more pronounced for the (more challenging) Gierer--Meinhardt equations.
Note that iteration numbers of the rational best approximation poles include polynomial Krylov iterations performed after exhaustion of the $30$ available poles, cf.~\Cref{sec:rk_poles}.
\Cref{fig:2D_GM_comparison_poles} underpins the observation described in the beginning of \Cref{sec:experiments} that our method improves comparable methods on challenging problems.

\begin{figure}
	\centering
	\subfloat[Krylov iteration numbers]{
		\includegraphics[width=.43\textwidth]{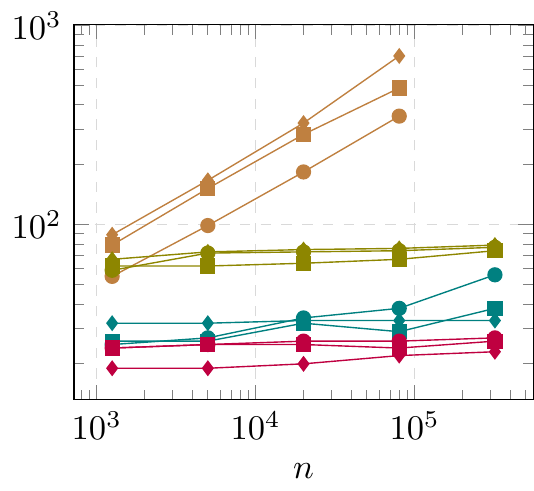}
	}
	\hfill
	\subfloat[Runtime in seconds]{
		\includegraphics[width=.43\textwidth]{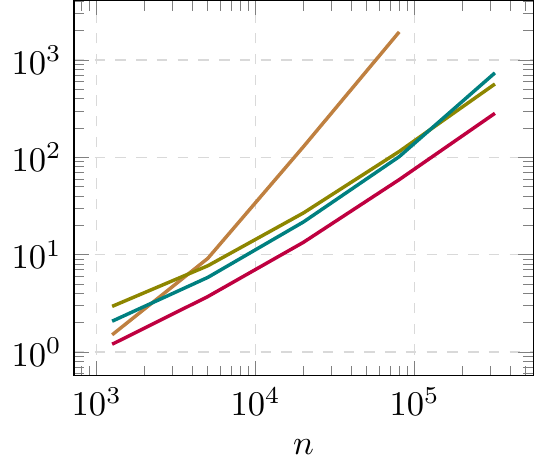}
	}
	\vspace{10pt}
	\includegraphics[width=.9\textwidth]{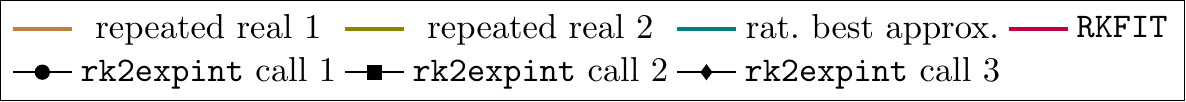}
	\vspace{-10pt}
	\caption{Rational Krylov iteration and runtime comparison for different choices of poles $\xi_j$ for the 2D Gierer--Meinhardt equation solved with the \texttt{ETD3RK} integrator.
	The problem settings corresponds to that of \Cref{fig:iter_runtime_2D_GM}.}\label{fig:2D_GM_comparison_poles}
\end{figure}

\subsection{Allen--Cahn equation on networks}

\begin{table}
	\centering
	\begin{tabular}{|l|rrrcc|}
		\hline\hline
		Network & $n$ & $|E|$ & $\lmax$ & $D$ & $\Sigma$\\\hline\hline
		minnesota & $2\,640$ & $6\,604$ & $6.88$ & $5 \cdot 10^3$ & $[0, 1.72 \cdot 10^3]$\\
		usroads (subset) & $17\,502$ & $46\,418$ & $8.22$ & $5 \cdot 10^4$ & $[0, 2.06 \cdot 10^4]$\\
		ak2010 & $42\,381$ & $204\,182$ & $3.32 \cdot 10^8$ & $10^{-2}$ & $[0, 1.66 \cdot 10^5]$\\
		luxembourg-osm & $114\,599$ & $239\,332$ & $7.21$ & $5 \cdot 10^6$ & $[0, 1.81 \cdot 10^6]$\\
		ny2010 & $350\,167$ & $1\,709\,544$ & $2.30 \cdot 10^7$ & $5$ & $[0, 5.75 \cdot 10^6]$\\
		roadNet-PA & $1\,087\,562$ & $3\,083\,028$ & $10.35$ & $5 \cdot 10^7$ & $[0, 2.59 \cdot 10^7]$\\\hline\hline
	\end{tabular}
	\caption{Example networks for the Allen--Cahn equation.
		The parameter $n$ denotes the number of nodes, $|E|$ the number of edges of the network, and $\lmax$ the largest eigenvalue of its unnormalized graph Laplacian.
		Furthermore, $D$ denotes the diffusion constant and $\Sigma$ the resulting spectrum of $\epsilon D \bm{A}$.
		For the usroads subset network, we filtered the longitudinal coordinates of the full usroads network for the interval $[-125, -115]$ in order to reduce the network size.}\label{tab:networks_AC}
\end{table}

\begin{figure}
	\includegraphics[width=0.16\textwidth]{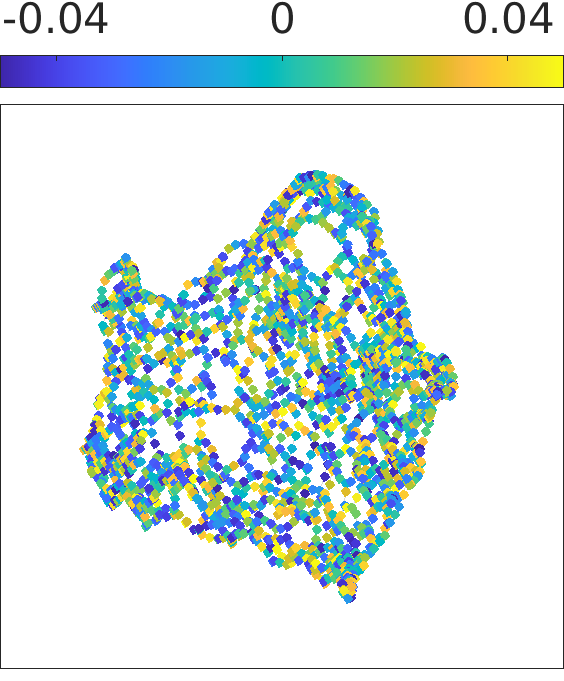}
	\includegraphics[width=0.16\textwidth]{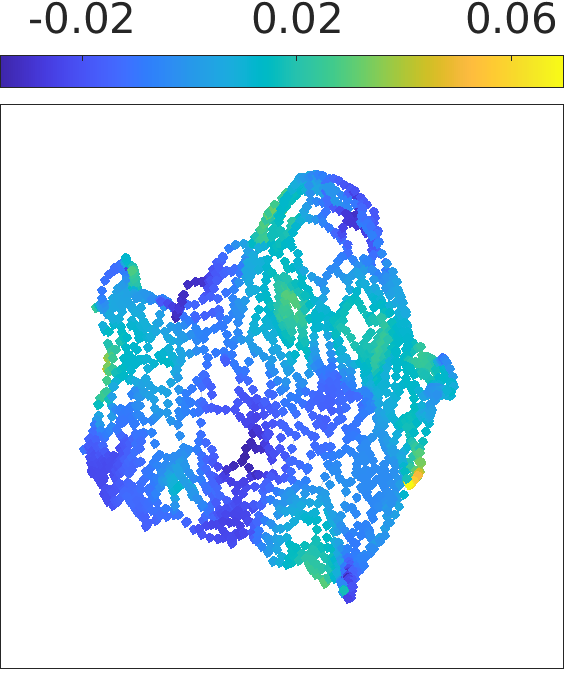}
	\includegraphics[width=0.16\textwidth]{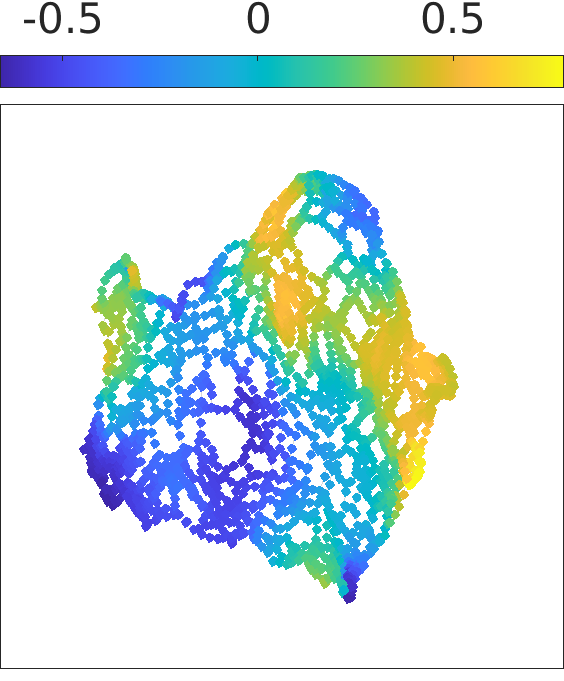}
	\includegraphics[width=0.16\textwidth]{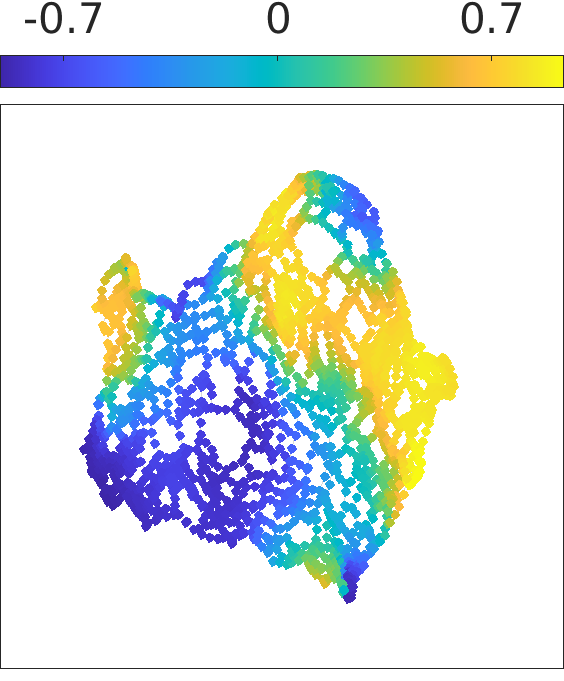}
	\includegraphics[width=0.16\textwidth]{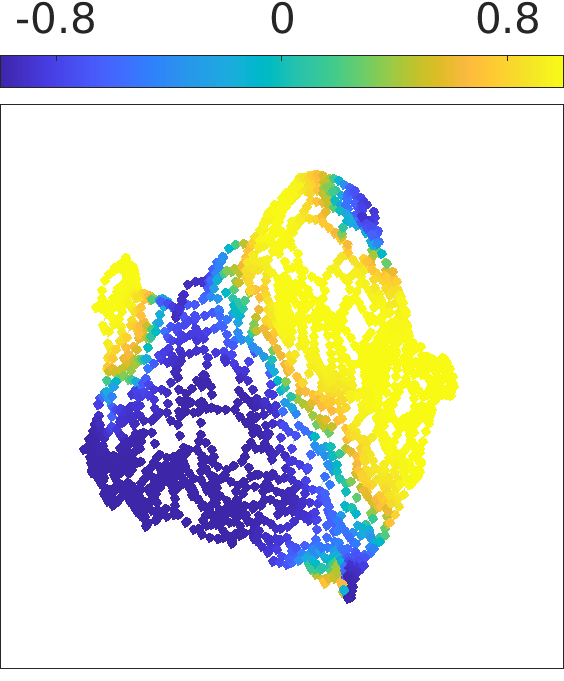}
	\includegraphics[width=0.16\textwidth]{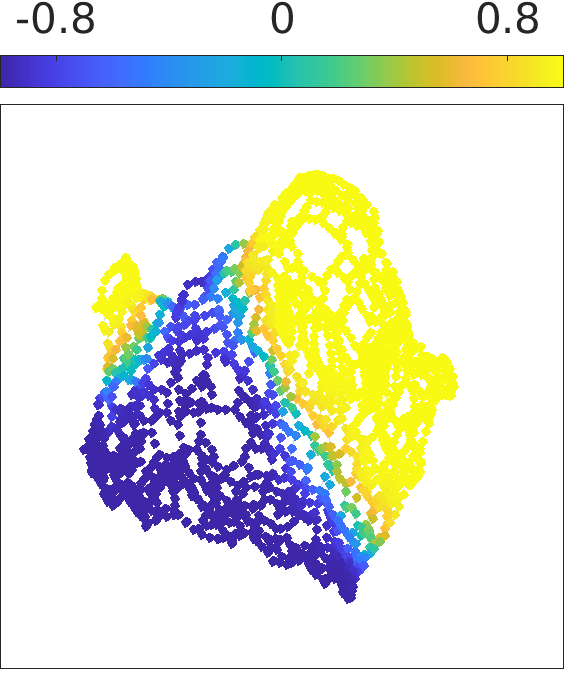}
	\caption{Allen--Cahn simulation result on the minnesota network.
		The parameters are chosen $\epsilon=0.05, D=5 \cdot 10^3$.
		The time interval is $[0,1]$ with a time step size of $h_i=0.01$.
		From left to right, the corresponding times are $t=0, t=0.1, t=0.3, t=0.35, t=0.5, t=1$.}\label{fig:graph_minnesota_AC_example_solution}
\end{figure}

\begin{figure}
	\includegraphics[width=0.16\textwidth]{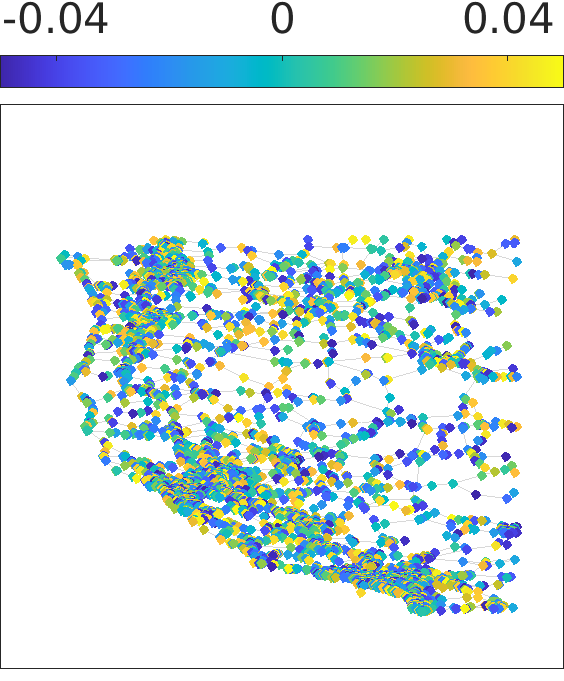}
	\includegraphics[width=0.16\textwidth]{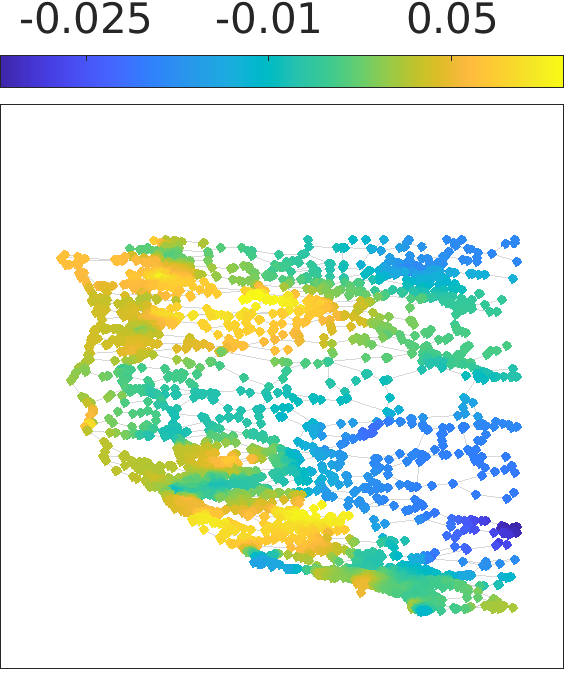}
	\includegraphics[width=0.16\textwidth]{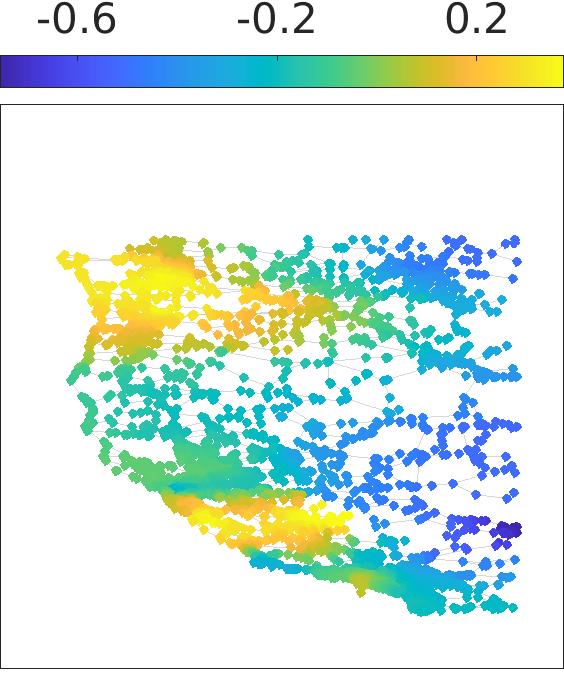}
	\includegraphics[width=0.16\textwidth]{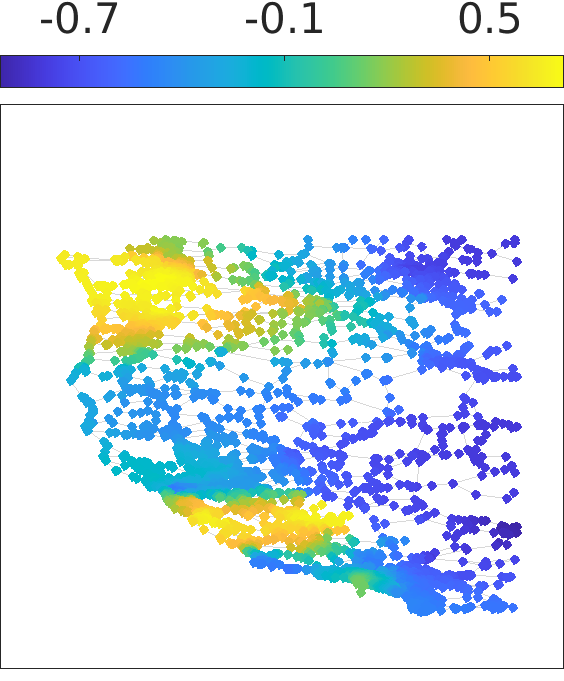}
	\includegraphics[width=0.16\textwidth]{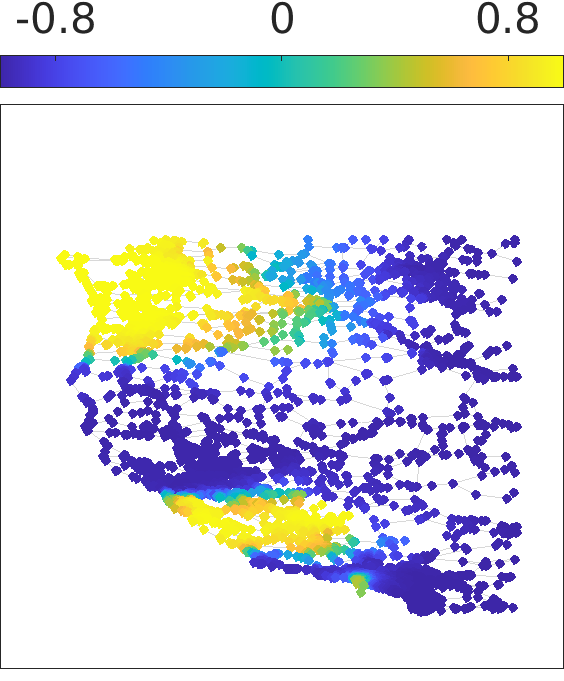}
	\includegraphics[width=0.16\textwidth]{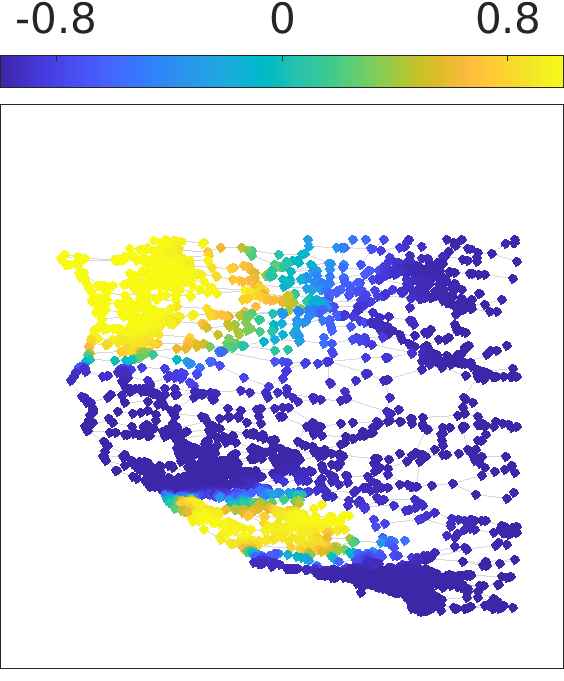}
	\caption{Allen--Cahn simulation result on the usroads (subset) network.
		The parameters are chosen $\epsilon=0.05, D=5 \cdot 10^4$.
		The time interval is $[0,1]$ with a time step size of $h_i=0.01$.
		From left to right, the corresponding times are $t=0, t=0.1, t=0.3, t=0.35, t=0.5, t=1$.}\label{fig:graph_usroads_AC_example_solution}
\end{figure}

As discussed in \Cref{sec:odes}, we can also solve the previously considered semi-linear parabolic PDEs on graphs or networks by using the (unnormalized) graph Laplacian as discrete linear differential operator.
Such problems (with an additional data fidelity term), for example, arise in semi-supervised learning techniques on graphs \cite{bertozzi2012diffuse,bertozzi2016diffuse,budd2021classification,bergermann2021semi}.

\begin{figure}
	\centering
	\subfloat[Krylov iteration numbers]{
		\includegraphics[width=.42\textwidth]{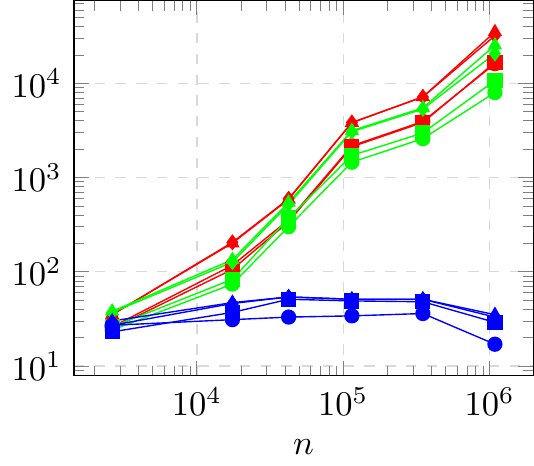}
	}
	\hfill
	\subfloat[Runtime in seconds]{
		\includegraphics[width=.42\textwidth]{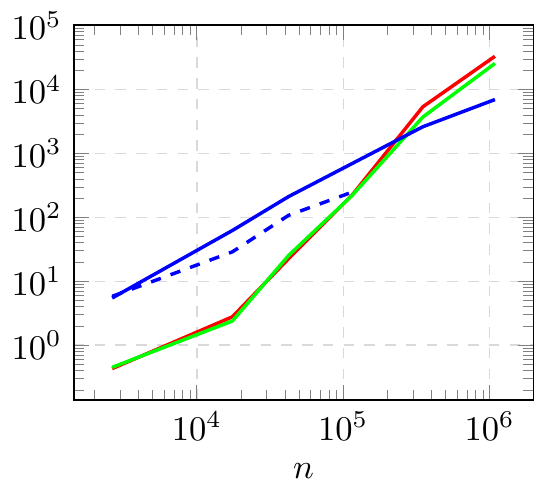}
	}
	\vspace{10pt}
	\includegraphics[width=.99\textwidth]{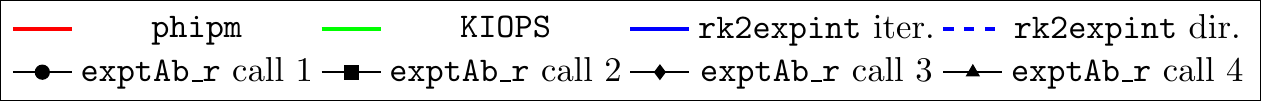}
	\vspace{-10pt}
	\caption{Average Krylov iteration numbers per evaluation of $e^{h_i \widetilde{\bm{A}}}\widetilde{\bm{c}}$ and total runtimes of solving the graph Allen--Cahn equation on the networks summarized in \Cref{tab:networks_AC}.
		We use the \texttt{Krogstad4} exponential Runge--Kutta integrator, which requires four evaluations of $e^{h_i \widetilde{\bm{A}}}\widetilde{\bm{c}}$ per time step.
		For \texttt{rk2expint}, we use complex-valued $(70,60)$ \texttt{RKFIT} poles fitted on the interval $[0,10^8]$ and report runtimes of direct and preconditioned iterative linear system solves.}\label{fig:iter_runtime_graph_AC}
\end{figure}

The major difference to the previously considered finite difference discretization of the Laplacian on continuous domains is that (unless appropriately weighted) networks do usually not contain spatial information.
In order to produce the usual patterns on the full network scale, we consider the scaled formulation
\begin{equation}\label{eq:allen_cahn_network}
\frac{\partial u}{\partial t} = \epsilon D \Delta u + \frac{1}{\epsilon} (u - u^3), \quad \epsilon,D\in\R,
\end{equation}
in the graph setting, which allows to trade off the linear diffusion and non-linear reaction parts of the equation.
In \Cref{tab:networks_AC}, we summarize weighted and unweighted undirected example networks downloaded from the SuiteSparse Matrix Collection\footnote{\url{https://sparse.tamu.edu/}}.
We take the largest connected component of each network to construct their unnormalized graph Laplacians.
Additionally, we choose the interface parameter $\epsilon=0.05$ for all networks and adjust the diffusion constant $D$ such that pattern formation on the full network scale is obtained.

\Cref{fig:graph_minnesota_AC_example_solution} and \Cref{fig:graph_usroads_AC_example_solution} show example trajectories of solutions to the graph Allen--Cahn equation on the minnesota and usroads (subset) networks, respectively.
Overall, we found that compared to discretized continuous domains, the same equation is more difficult to solve on graphs in the sense that smaller time steps and more rational Krylov poles are required to obtain accurate ODE solutions.
Throughout our experiments, we used complex-valued \texttt{RKFIT} poles of type $(70,60)$ fitted on the interval $[0,10^8]$.
For a time step size of $h_i=0.05$, we perform the usual comparison of average Krylov iteration numbers per time step and total runtimes and report the results in \Cref{fig:iter_runtime_graph_AC}.
The same general qualitative observations made in the discretized continuous setting hold true, i.e., approximately constant rational Krylov iteration numbers and a near-linear runtime dependence on $\|h_i\widetilde{\bm{A}}\|_2$, i.e., the problem size.
However, we observe a higher variation of rational Krylov iteration numbers across different networks, which is presumably related to varying network structures.
Furthermore, a less uniform increase in polynomial Krylov iteration numbers is observed, which is caused by the less uniform growth in the graph spectra reported in \Cref{tab:networks_AC}.

\subsection{Gierer--Meinhardt equations on networks}

Finally, we briefly comment on the Gierer--Meinhardt equations \cref{eq:pde_gm_a,eq:pde_gm_h} on networks.
While the qualitative behavior of the solutions is very different, the formation of Turing patterns in activator-inhibitor systems such as the Gierer--Meinhardt equations on scale-free networks has been shown to provide insights into biological networks such as cellular networks,  \cite{nakao2010turing}.
We illustrate an exemplary pattern formation process on networks in \Cref{fig:graph_brightkite_GM_example_solution} at the example of the largest connected component of the undirected scale-free loc-Brightkite network from the SuiteSparse Matrix Collection\footnote{\url{https://sparse.tamu.edu/}}.
Although we observe a differentiation into concentration-rich and -low groups, ordered periodic patterns can not be identified due to the lack of spatial relations between the nodes.

\begin{figure}
	\includegraphics[width=0.16\textwidth]{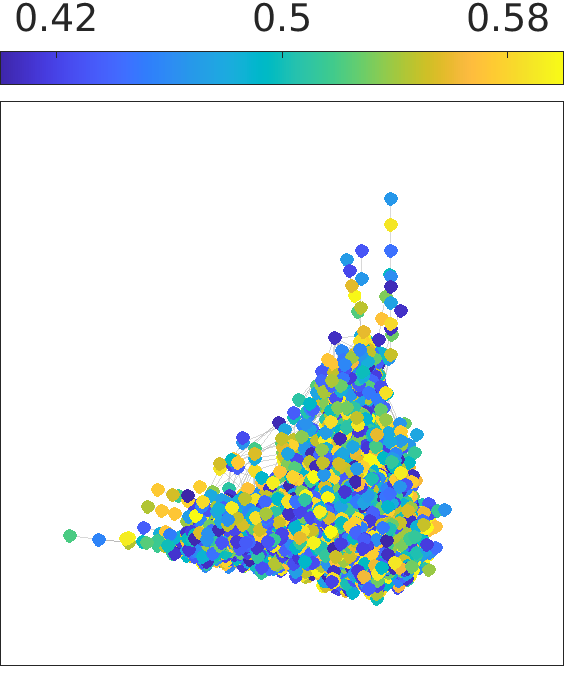}
	\includegraphics[width=0.16\textwidth]{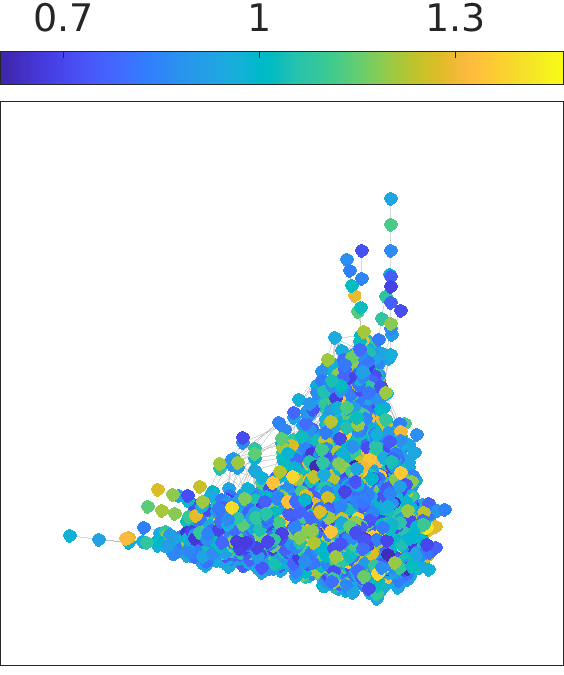}
	\includegraphics[width=0.16\textwidth]{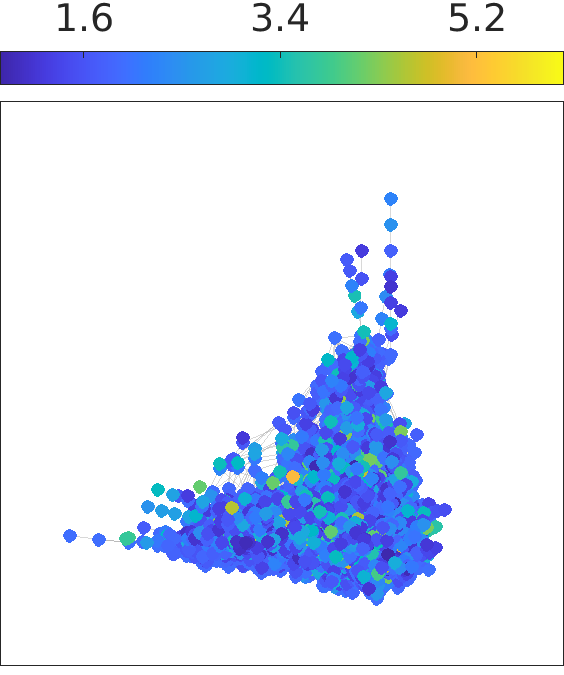}
	\includegraphics[width=0.16\textwidth]{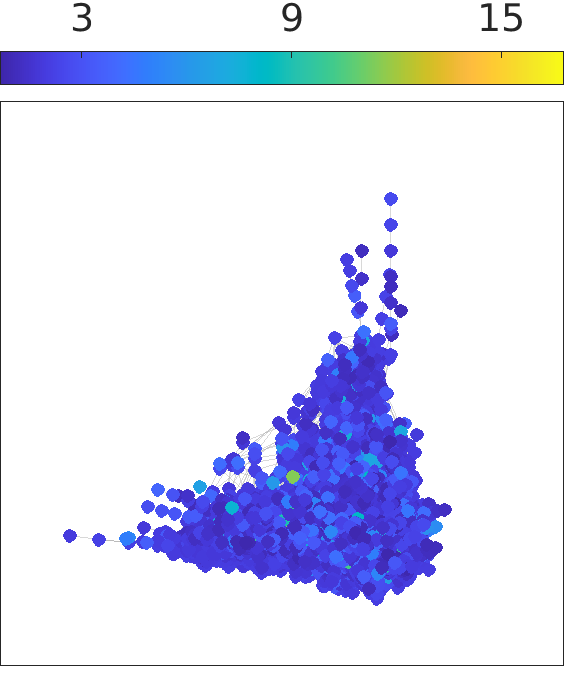}
	\includegraphics[width=0.16\textwidth]{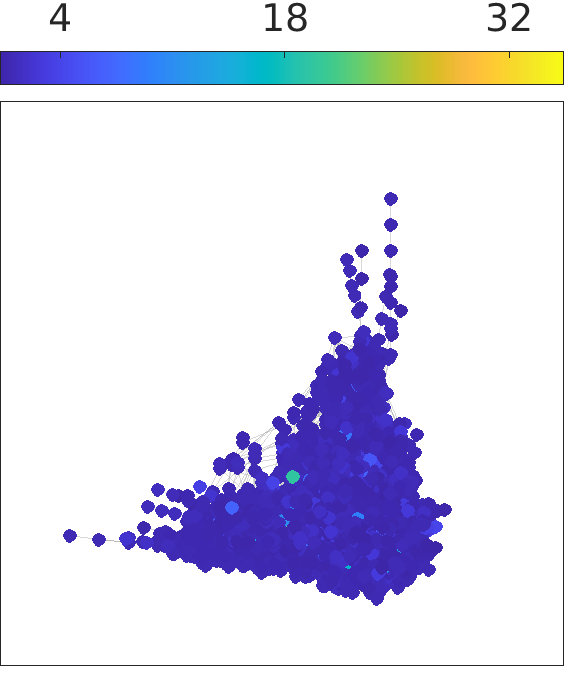}
	\includegraphics[width=0.16\textwidth]{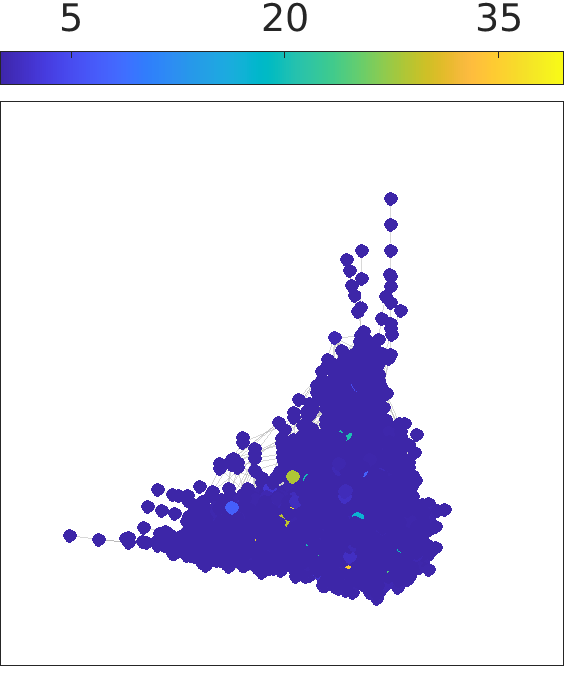}
	
	\vspace{2pt}
	
	\includegraphics[width=0.16\textwidth]{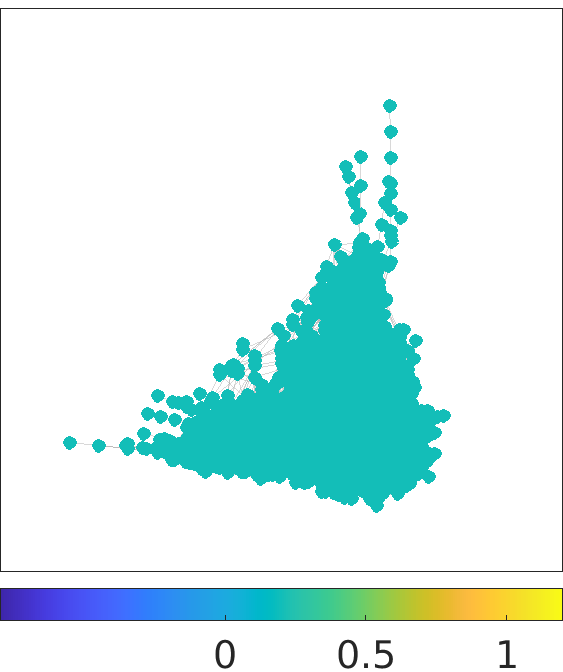}
	\includegraphics[width=0.16\textwidth]{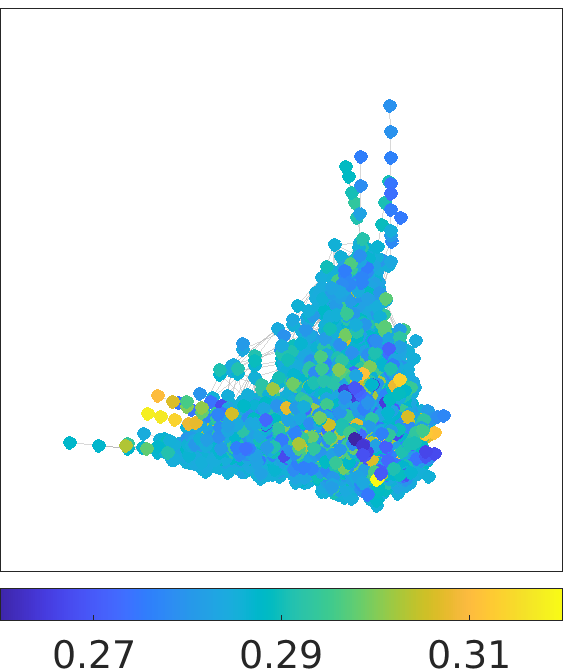}
	\includegraphics[width=0.16\textwidth]{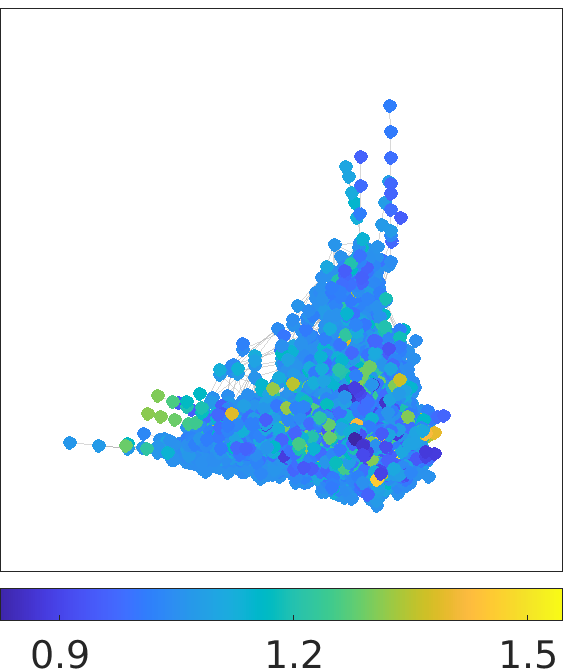}
	\includegraphics[width=0.16\textwidth]{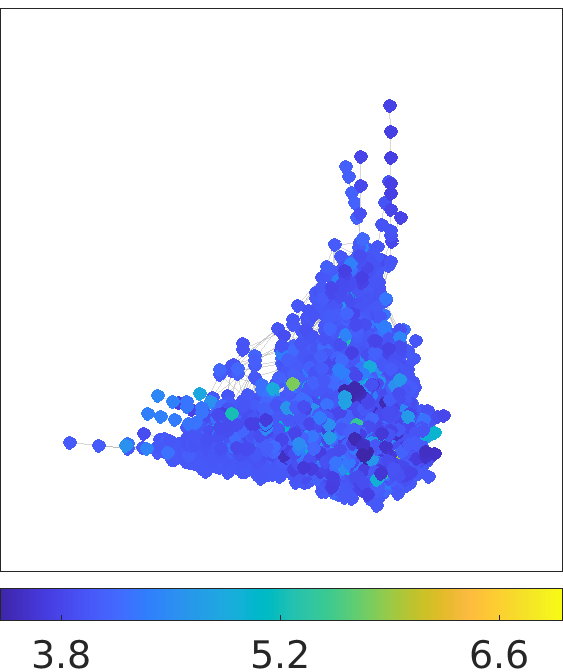}
	\includegraphics[width=0.16\textwidth]{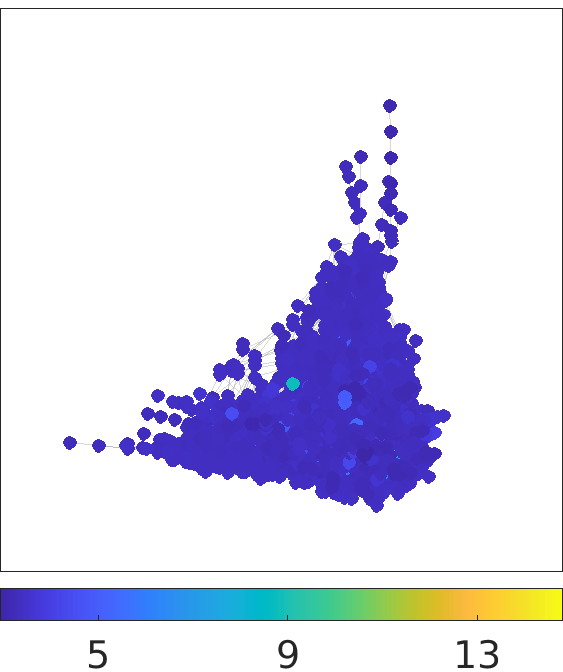}
	\includegraphics[width=0.16\textwidth]{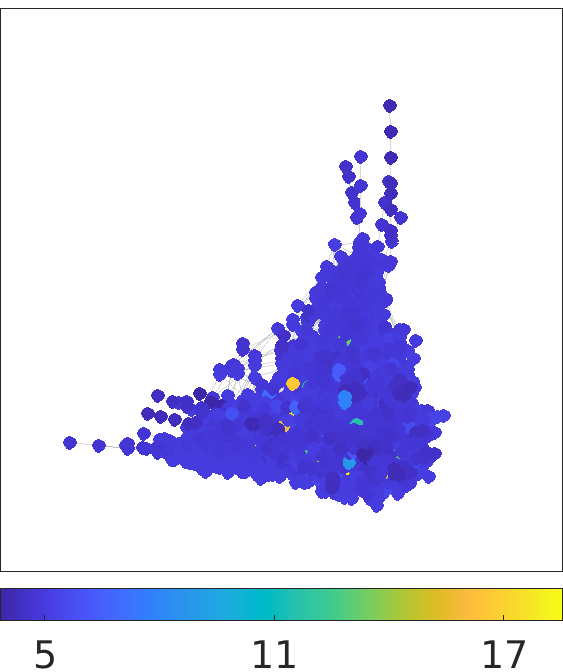}
	\caption{Gierer--Meinhardt simulation result on the loc-Brightkite network.
		The top row shows activator and the bottom row inhibitor concentrations.
		Initial activator concentrations are random while initial inhibitor concentrations are constant.
		The parameters are chosen $D_a=10, D_h=1\,000, p=\mu=p'=\nu = 8$ over the time interval $[0,1]$ with a time step size of $h_i=0.01$.
		From left to right, the corresponding times are $t=0, t=0.05, t=0.1, t=0.2, t=0.4, t=1$.}\label{fig:graph_brightkite_GM_example_solution}
\end{figure}

Numerically, we make the same observations as for the graph Allen--Cahn equation, i.e., smaller time step sizes and more rational Krylov poles are required for accurate ODE solutions.
Apart from this, approximately constant rational Krylov iteration numbers as well as a near-linear runtime of \texttt{rk2expint} are confirmed.

\section{Conclusion and outlook}\label{sec:conclusion}

This work presents an approach to apply adaptive rational Krylov methods to the efficient evaluation of exponential Runge--Kutta integrators used to solve large stiff systems of ODEs.
Numerical experiments confirm approximately constant rational Krylov iteration numbers independently of the problem size, the time step size, and the spectrum of the discrete linear differential operators.
This leads to a near-linear scaling of the runtime that can not be obtained by methods based on polynomial Krylov methods.

While we focused on real symmetric discrete linear differential operators, our approach should be extendable to more general nonsymmetric or complex-valued problems \cref{eq:ode_system}.
However, as in this situation the approximation domain for the exponential function is generally complex-valued, different pole selection strategies are required.
Furthermore, our method may be applicable to exponential Rosenbrock or EPIRK integrators in situations where the spectra of the local linearizations of general right-hand sides can be bounded.

\section*{Acknowledgments}

We thank Oliver Ernst, Stefan G\"uttel, and John Pearson for helpful hints and discussions.

\end{document}